\pdfoutput=1  

\documentclass[12pt,a4paper]{article}

\usepackage{amsfonts,amssymb,amsthm,amsmath,latexsym}
\usepackage{eqsection,indent}
\usepackage{subeqnarray, eepicemu, url,cite,bm,mathtools}
\usepackage{multirow, url}  
\usepackage{graphicx,graphbox}
\usepackage{tikz,xcolor}
\usepackage{float}  

\usepackage{forest}

\usepackage[scr=dutchcal,scrscaled=1.05]{mathalfa}

\usepackage{color}

\date{October 27, 2026}

\begin{document}

\title{Generalised hyperbolic polynomials: combinatorial interpretation and real-rootedness}

\author{ 
      \hspace*{-1cm}
      {\large Bishal Deb${}^1$}
   \\[5mm]
     \hspace*{-1.3cm}
     \normalsize
     ${}^1$Yau Mathematical Sciences Center, Tsinghua University, Beijing 100084, China\\
     \normalsize
     \texttt{bishal@gonitsora.com}
}
\maketitle
\thispagestyle{empty}   

\begin{abstract}
For $p,q$ natural numbers, define $\Delta^{\langle p,q \rangle}:=x^{p-1}\frac{\partial}{\partial y} + y^{q-1}\frac{\partial}{\partial x}$,
and consider the sequence of polynomials $\biggl((\Delta^{\langle p,q \rangle})^n \cdot x\biggr)_{n\geq 0}$;
we call these the generalised hyperbolic polynomials.
In this paper, we provide a simple combinatorial interpretation for these polynomials.
As special cases, we obtain combinatorial interpretations for the coefficients of the  analogs of sinh and cosh
corresponding to the  curve $x^p- p y^q/q=1$;
on setting $p=q=3$, we obtain Conrad and Flajolet's tree interpretation for the coefficients of the 
Dixonian elliptic functions (2006) as a special case.
Finally, we show that for any $p,q\geq 3$, after suitably normalising followed by substituting $y=1$,
we obtain a sequence of univariate polynomials which are real-rooted.
\end{abstract}

\bigskip
\noindent
{\bf Key Words:}
Generalised hyperbolic polynomials, arity-parity trees, real-rootedness

\bigskip
\bigskip
\noindent
{\bf Mathematics Subject Classification (MSC 2020) codes:}
05A15 (Primary);
05A19, 33B10, 33C75 (Secondary)

\vspace*{1cm}

\newtheorem{theorem}{Theorem}[section]
\newtheorem{proposition}[theorem]{Proposition}
\newtheorem{lemma}[theorem]{Lemma}
\newtheorem{corollary}[theorem]{Corollary}
\newtheorem{definition}[theorem]{Definition}
\newtheorem{conjecture}[theorem]{Conjecture}
\newtheorem{question}[theorem]{Question}
\newtheorem{problem}[theorem]{Problem}
\newtheorem{openproblem}[theorem]{Open Problem}
\newtheorem{example}[theorem]{Example}

\renewcommand{\theenumi}{\alph{enumi}}
\renewcommand{\labelenumi}{(\theenumi)}
\def\eop{\hbox{\kern1pt\vrule height6pt width4pt
depth1pt\kern1pt}\medskip}
\def\prf{\par\noindent{\bf Proof.\enspace}\rm}
\def\rmk{\par\medskip\noindent{\bf Remark\enspace}\rm}

\newcommand{\textbfit}[1]{\textbf{\textit{#1}}}

\newcommand{\bigdash}{%
\smallskip\begin{center} \rule{5cm}{0.1mm} \end{center}\smallskip}

\newcommand{\safepar}{ {\protect\hfill\protect\break\hspace*{5mm}} }

\newcommand{\be}{\begin{equation}}
\newcommand{\ee}{\end{equation}}
\newcommand{\<}{\langle}
\renewcommand{\>}{\rangle}
\newcommand{\widebar}{\overline}
\def\reff#1{(\protect\ref{#1})}
\def\spose#1{\hbox to 0pt{#1\hss}}
\def\ltapprox{\mathrel{\spose{\lower 3pt\hbox{$\mathchar"218$}}
    \raise 2.0pt\hbox{$\mathchar"13C$}}}
\def\gtapprox{\mathrel{\spose{\lower 3pt\hbox{$\mathchar"218$}}
    \raise 2.0pt\hbox{$\mathchar"13E$}}}
\def\textprime{${}^\prime$}
\def\proof{\par\medskip\noindent{\sc Proof.\ }}
\def\firstproof{\par\medskip\noindent{\sc First Proof.\ }}
\def\secondproof{\par\medskip\noindent{\sc Second Proof.\ }}
\def\alternateproof{\par\medskip\noindent{\sc Alternate Proof.\ }}
\def\algebraicproof{\par\medskip\noindent{\sc Algebraic Proof.\ }}
\def\combinatorialproof{\par\medskip\noindent{\sc Combinatorial Proof.\ }}
\def\proofof#1{\bigskip\noindent{\sc Proof of #1.\ }}
\def\firstproofof#1{\bigskip\noindent{\sc First Proof of #1.\ }}
\def\secondproofof#1{\bigskip\noindent{\sc Second Proof of #1.\ }}
\def\thirdproofof#1{\bigskip\noindent{\sc Third Proof of #1.\ }}
\def\algebraicproofof#1{\bigskip\noindent{\sc Algebraic Proof of #1.\ }}
\def\combinatorialproofof#1{\bigskip\noindent{\sc Combinatorial Proof of #1.\ }}
\def\sketchofproof{\par\medskip\noindent{\sc Sketch of proof.\ }}
\renewcommand{\qed}{ $\square$ \bigskip}
\newcommand{\myendremark}{ $\blacksquare$ \bigskip}
\def\half{ {1 \over 2} }
\def\third{ {1 \over 3} }
\def\twothird{ {2 \over 3} }
\def\smfrac#1#2{{\textstyle{#1\over #2}}}
\def\smhalf{ {\smfrac{1}{2}} }
\newcommand{\real}{\mathop{\rm Re}\nolimits}
\renewcommand{\Re}{\mathop{\rm Re}\nolimits}
\newcommand{\imag}{\mathop{\rm Im}\nolimits}
\renewcommand{\Im}{\mathop{\rm Im}\nolimits}
\newcommand{\sgn}{\mathop{\rm sgn}\nolimits}
\newcommand{\tr}{\mathop{\rm tr}\nolimits}
\newcommand{\supp}{\mathop{\rm supp}\nolimits}
\newcommand{\disc}{\mathop{\rm disc}\nolimits}
\newcommand{\diag}{\mathop{\rm diag}\nolimits}
\newcommand{\tridiag}{\mathop{\rm tridiag}\nolimits}
\newcommand{\AZ}{\mathop{\rm AZ}\nolimits}
\newcommand{\NC}{\mathop{\rm NC}\nolimits}
\newcommand{\PF}{{\rm PF}}
\newcommand{\rk}{\mathop{\rm rk}\nolimits}
\newcommand{\perm}{\mathop{\rm perm}\nolimits}
\def\hboxscript#1{ {\hbox{\scriptsize\em #1}} }
\renewcommand{\emptyset}{\varnothing}
\newcommand{\eqdef}{\stackrel{\rm def}{=}}

\newcommand{\restrict}{\upharpoonright}

\newcommand{\compinv}{{\langle -1 \rangle}}   

\newcommand{\scra}{{\mathcal{A}}}
\newcommand{\scrb}{{\mathcal{B}}}
\newcommand{\scrc}{{\mathcal{C}}}
\newcommand{\bfscra}{{\bm{\mathcal{A}}}}
\newcommand{\bfscrb}{{\bm{\mathcal{B}}}}
\newcommand{\bfscrc}{{\bm{\mathcal{C}}}}
\newcommand{\bfscrap}{{\bm{\mathcal{A}'}}}
\newcommand{\bfscrbp}{{\bm{\mathcal{B}'}}}
\newcommand{\bfscrcp}{{\bm{\mathcal{C}'}}}
\newcommand{\bfscrapp}{{\bm{\mathcal{A}''}}}
\newcommand{\bfscrbpp}{{\bm{\mathcal{B}''}}}
\newcommand{\bfscrcpp}{{\bm{\mathcal{C}''}}}
\newcommand{\scrd}{{\mathcal{D}}}
\newcommand{\scre}{{\mathcal{E}}}
\newcommand{\scrf}{{\mathcal{F}}}
\newcommand{\scrg}{{\mathcal{G}}}
\newcommand{\scrgg}{{\mathscr{g}}}  
\newcommand{\scrh}{{\mathcal{H}}}
\newcommand{\scri}{{\mathcal{I}}}
\newcommand{\scrj}{{\mathcal{J}}}
\newcommand{\scrk}{{\mathcal{K}}}
\newcommand{\scrl}{{\mathcal{L}}}
\newcommand{\scrm}{{\mathcal{M}}}
\newcommand{\scrn}{{\mathcal{N}}}
\newcommand{\scro}{{\mathcal{O}}}
\newcommand\scroo{
  \mathchoice
    {{\scriptstyle\mathcal{O}}}
    {{\scriptstyle\mathcal{O}}}
    {{\scriptscriptstyle\mathcal{O}}}
    {\scalebox{0.6}{$\scriptscriptstyle\mathcal{O}$}}
  }
\newcommand{\scrp}{{\mathcal{P}}}
\newcommand{\scrq}{{\mathcal{Q}}}
\newcommand{\scrr}{{\mathcal{R}}}
\newcommand{\scrs}{{\mathcal{S}}}
\newcommand{\scrss}{{\mathscr{s}}}  
\newcommand{\scrt}{{\mathcal{T}}}
\newcommand{\scrx}{{\mathcal{X}}}
\newcommand{\scry}{{\mathcal{Y}}}
\newcommand{\scrv}{{\mathcal{V}}}
\newcommand{\scrw}{{\mathcal{W}}}
\newcommand{\scrz}{{\mathcal{Z}}}
\newcommand{\SP}{{\mathcal{SP}}}
\newcommand{\ST}{{\mathcal{ST}}}

\newcommand{\bfa}{{\mathbf{a}}}
\newcommand{\bfb}{{\mathbf{b}}}
\newcommand{\bfc}{{\mathbf{c}}}
\newcommand{\bfd}{{\mathbf{d}}}
\newcommand{\bfe}{{\mathbf{e}}}
\newcommand{\bfh}{{\mathbf{h}}}
\newcommand{\bfj}{{\mathbf{j}}}
\newcommand{\bfi}{{\mathbf{i}}}
\newcommand{\bfk}{{\mathbf{k}}}
\newcommand{\bfl}{{\mathbf{l}}}
\newcommand{\bfL}{{\mathbf{L}}}
\newcommand{\bfm}{{\mathbf{m}}}
\newcommand{\bfn}{{\mathbf{n}}}
\newcommand{\bfp}{{\mathbf{p}}}
\newcommand{\bfr}{{\mathbf{r}}}
\newcommand{\bfu}{{\mathbf{u}}}
\newcommand{\bfv}{{\mathbf{v}}}
\newcommand{\bfw}{{\mathbf{w}}}
\newcommand{\bfx}{{\mathbf{x}}}
\newcommand{\bfy}{{\mathbf{y}}}
\newcommand{\bfz}{{\mathbf{z}}}
\renewcommand{\k}{{\mathbf{k}}}
\newcommand{\n}{{\mathbf{n}}}
\newcommand{\vv}{{\mathbf{v}}}
\newcommand{\bv}{{\mathbf{v}}}
\newcommand{\w}{{\mathbf{w}}}
\newcommand{\x}{{\mathbf{x}}}
\newcommand{\y}{{\mathbf{y}}}
\newcommand{\cc}{{\mathbf{c}}}
\newcommand{\zero}{{\mathbf{0}}}
\newcommand{\one}{{\mathbf{1}}}
\newcommand{\bmm}{{\mathbf{m}}}

\newcommand{\ahat}{{\widehat{a}}}
\newcommand{\vhat}{{\widehat{v}}}
\newcommand{\yhat}{{\widehat{y}}}
\newcommand{\phat}{{\widehat{p}}}
\newcommand{\qhat}{{\widehat{q}}}
\newcommand{\Xhat}{{\widehat{X}}}
\newcommand{\Zhat}{{\widehat{Z}}}
\newcommand{\myhat}{{\widehat{\;}}}
\newcommand{\vtilde}{{\widetilde{v}}}
\newcommand{\ytilde}{{\widetilde{y}}}

\newcommand{\C}{{\mathbb C}}
\newcommand{\D}{{\mathbb D}}
\newcommand{\Z}{{\mathbb Z}}
\newcommand{\N}{{\mathbb N}}
\newcommand{\Q}{{\mathbb Q}}
\newcommand{\PP}{{\mathbb P}}
\newcommand{\R}{{\mathbb R}}
\newcommand{\RR}{{\mathbb R}}
\newcommand{\E}{{\mathbb E}}

\newcommand{\Sym}{{\mathfrak{S}}}
\newcommand{\SymB}{{\mathfrak{B}}}
\newcommand{\Alt}{{\mathrm{Alt}}}
\newcommand{\dperm}{{\mathfrak{D}}}
\newcommand{\dcycle}{{\mathfrak{DC}}}

\newcommand{\germanA}{{\mathfrak{A}}}
\newcommand{\germanB}{{\mathfrak{B}}}
\newcommand{\germanQ}{{\mathfrak{Q}}}
\newcommand{\germanh}{{\mathfrak{h}}}

\newcommand{\myle}{\preceq}
\newcommand{\myge}{\succeq}
\newcommand{\mygt}{\succ}

\newcommand{\B}{{\sf B}}
\newcommand{\OB}{B^{\rm ord}}
\newcommand{\OS}{{\sf OS}}
\newcommand{\OO}{{\sf O}}
\newcommand{\OSP}{{\sf OSP}}
\newcommand{\Eu}{{\sf Eu}}
\newcommand{\ERR}{{\sf ERR}}
\newcommand{\sfB}{{\sf B}}
\newcommand{\sfD}{{\sf D}}
\newcommand{\sfE}{{\sf E}}
\newcommand{\sfG}{{\sf G}}
\newcommand{\sfJ}{{\sf J}}
\newcommand{\sfP}{{\sf P}}
\newcommand{\sfQ}{{\sf Q}}
\newcommand{\sfS}{{\sf S}}
\newcommand{\sfT}{{\sf T}}
\newcommand{\sfW}{{\sf W}}
\newcommand{\sfMV}{{\sf MV}}
\newcommand{\AMV}{{\sf AMV}}
\newcommand{\BM}{{\sf BM}}
\newcommand{\emIB}{B^{\rm irr}}
\newcommand{\emIP}{P^{\rm irr}}
\newcommand{\emOB}{B^{\rm ord}}
\newcommand{\emCB}{B^{\rm cyc}}
\newcommand{\emSC}{P^{\rm cyc}}

\newcommand{\lev}{{\rm lev}}
\newcommand{\stat}{{\rm stat}}
\newcommand{\cyc}{{\rm cyc}}
\newcommand{\mysteryone}{{\rm mys1}}
\newcommand{\mysterytwo}{{\rm mys2}}
\newcommand{\Asc}{{\rm Asc}}
\newcommand{\asc}{{\rm asc}}
\newcommand{\Des}{{\rm Des}}
\newcommand{\des}{{\rm des}}
\newcommand{\Exc}{{\rm Exc}}

\newcommand{\EArec}{{\rm EArec}}
\newcommand{\earec}{{\rm earec}}
\newcommand{\recarec}{{\rm recarec}}
\newcommand{\erec}{{\rm erec}}
\newcommand{\nonrec}{{\rm nonrec}}
\newcommand{\nrar}{{\rm nrar}}
\newcommand{\ereccval}{{\rm ereccval}}
\newcommand{\ereccdrise}{{\rm ereccdrise}}
\newcommand{\eareccpeak}{{\rm eareccpeak}}
\newcommand{\eareccdfall}{{\rm eareccdfall}}
\newcommand{\eareccval}{{\rm eareccval}}
\newcommand{\ereccpeak}{{\rm ereccpeak}}
\newcommand{\rar}{{\rm rar}}
\newcommand{\evenrar}{{\rm evenrar}}
\newcommand{\oddrar}{{\rm oddrar}}
\newcommand{\nrcpeak}{{\rm nrcpeak}}
\newcommand{\nrcval}{{\rm nrcval}}
\newcommand{\nrcdrise}{{\rm nrcdrise}}
\newcommand{\nrcdfall}{{\rm nrcdfall}}
\newcommand{\nrfix}{{\rm nrfix}}
\newcommand{\Evenfix}{{\rm Evenfix}}
\newcommand{\evenfix}{{\rm evenfix}}
\newcommand{\Oddfix}{{\rm Oddfix}}
\newcommand{\oddfix}{{\rm oddfix}}
\newcommand{\evennrfix}{{\rm evennrfix}}
\newcommand{\oddnrfix}{{\rm oddnrfix}}
\newcommand{\Cpeak}{{\rm Cpeak}}
\newcommand{\cpeak}{{\rm cpeak}}
\newcommand{\Cval}{{\rm Cval}}
\newcommand{\cval}{{\rm cval}}
\newcommand{\Cdasc}{{\rm Cdasc}}
\newcommand{\cdasc}{{\rm cdasc}}
\newcommand{\Cddes}{{\rm Cddes}}
\newcommand{\cddes}{{\rm cddes}}
\newcommand{\Cdrise}{{\rm Cdrise}}
\newcommand{\cdrise}{{\rm cdrise}}
\newcommand{\Cdfall}{{\rm Cdfall}}
\newcommand{\cdfall}{{\rm cdfall}}

\newcommand{\maxpeak}{{\rm maxpeak}}
\newcommand{\nmaxpeak}{{\rm nmaxpeak}}
\newcommand{\minval}{{\rm minval}}
\newcommand{\nminval}{{\rm nminval}}

\newcommand{\exc}{{\rm exc}}
\newcommand{\excee}{{\rm excee}}
\newcommand{\exceo}{{\rm exceo}}
\newcommand{\excoe}{{\rm excoe}}
\newcommand{\excoo}{{\rm excoo}}
\newcommand{\erecexcoe}{{\rm erecexcoe}}
\newcommand{\erecexcoo}{{\rm erecexcoo}}
\newcommand{\nrexcoe}{{\rm nrexcoe}}
\newcommand{\nrexcoo}{{\rm nrexcoo}}
\newcommand{\aexc}{{\rm aexc}}
\newcommand{\aexcee}{{\rm aexcee}}
\newcommand{\aexceo}{{\rm aexceo}}
\newcommand{\aexcoe}{{\rm aexcoe}}
\newcommand{\aexcoo}{{\rm aexcoo}}
\newcommand{\earecaexcee}{{\rm earecaexcee}}
\newcommand{\earecaexceo}{{\rm earecaexceo}}
\newcommand{\nraexcee}{{\rm nraexcee}}
\newcommand{\nraexceo}{{\rm nraexceo}}
\newcommand{\Fix}{{\rm Fix}}
\newcommand{\fix}{{\rm fix}}
\newcommand{\fixe}{{\rm fixe}}
\newcommand{\fixo}{{\rm fixo}}
\newcommand{\rare}{{\rm rare}}
\newcommand{\raro}{{\rm raro}}
\newcommand{\nrfixe}{{\rm nrfixe}}
\newcommand{\nrfixo}{{\rm nrfixo}}
\newcommand{\xee}{x_{\rm ee}}
\newcommand{\xeo}{x_{\rm eo}}
\newcommand{\uee}{u_{\rm ee}}
\newcommand{\ueo}{u_{\rm eo}}
\newcommand{\yoo}{y_{\rm oo}}
\newcommand{\yoe}{y_{\rm oe}}
\newcommand{\voo}{v_{\rm oo}}
\newcommand{\voe}{v_{\rm oe}}
\newcommand{\zo}{z_{\rm o}}
\newcommand{\ze}{z_{\rm e}}
\newcommand{\wo}{w_{\rm o}}
\newcommand{\we}{w_{\rm e}}
\newcommand{\so}{s_{\rm o}}
\newcommand{\se}{s_{\rm e}}

\newcommand{\Wex}{{\rm Wex}}
\newcommand{\wex}{{\rm wex}}
\newcommand{\lrmax}{{\rm lrmax}}
\newcommand{\rlmax}{{\rm rlmax}}
\newcommand{\Rec}{{\rm Rec}}
\newcommand{\rec}{{\rm rec}}
\newcommand{\Arec}{{\rm Arec}}
\newcommand{\arec}{{\rm arec}}
\newcommand{\ERec}{{\rm ERec}}
\newcommand{\Val}{{\rm Val}}
\newcommand{\val}{{\rm val}}
\newcommand{\dasc}{{\rm dasc}}
\newcommand{\ddes}{{\rm ddes}}
\newcommand{\inv}{{\rm inv}}
\newcommand{\maj}{{\rm maj}}
\newcommand{\rs}{{\rm rs}}
\newcommand{\cross}{{\rm cr}}
\newcommand{\crosshat}{{\widehat{\rm cr}}}
\newcommand{\nest}{{\rm ne}}
\newcommand{\ucross}{{\rm ucross}}
\newcommand{\ucrosscval}{{\rm ucrosscval}}
\newcommand{\ucrosscpeak}{{\rm ucrosscpeak}}
\newcommand{\ucrosscdrise}{{\rm ucrosscdrise}}
\newcommand{\lcross}{{\rm lcross}}
\newcommand{\lcrosscpeak}{{\rm lcrosscpeak}}
\newcommand{\lcrosscval}{{\rm lcrosscval}}
\newcommand{\lcrosscdfall}{{\rm lcrosscdfall}}
\newcommand{\unest}{{\rm unest}}
\newcommand{\unestcval}{{\rm unestcval}}
\newcommand{\unestcpeak}{{\rm unestcpeak}}
\newcommand{\unestcdrise}{{\rm unestcdrise}}
\newcommand{\lnest}{{\rm lnest}}
\newcommand{\lnestcpeak}{{\rm lnestcpeak}}
\newcommand{\lnestcval}{{\rm lnestcval}}
\newcommand{\lnestcdfall}{{\rm lnestcdfall}}
\newcommand{\ujoin}{{\rm ujoin}}
\newcommand{\ljoin}{{\rm ljoin}}
\newcommand{\psnest}{{\rm psnest}}
\newcommand{\upsnest}{{\rm upsnest}}
\newcommand{\lpsnest}{{\rm lpsnest}}
\newcommand{\epsnest}{{\rm epsnest}}
\newcommand{\opsnest}{{\rm opsnest}}
\newcommand{\rodd}{{\rm rodd}}
\newcommand{\reven}{{\rm reven}}
\newcommand{\lodd}{{\rm lodd}}
\newcommand{\leven}{{\rm leven}}
\newcommand{\sg}{{\rm sg}}
\newcommand{\bl}{{\rm bl}}
\newcommand{\tran}{{\rm tr}}
\newcommand{\area}{{\rm area}}
\newcommand{\ret}{{\rm ret}}
\newcommand{\peaks}{{\rm peaks}}
\newcommand{\hl}{{\rm hl}}
\newcommand{\sll}{{\rm sl}}
\newcommand{\negg}{{\rm neg}}
\newcommand{\imp}{{\rm imp}}
\newcommand{\osg}{{\rm osg}}
\newcommand{\ons}{{\rm ons}}
\newcommand{\isg}{{\rm isg}}
\newcommand{\ins}{{\rm ins}}
\newcommand{\LL}{{\rm LL}}
\newcommand{\height}{{\rm ht}}
\newcommand{\as}{{\rm as}}

\newcommand{\ba}{{\bm{a}}}
\newcommand{\bahat}{{\widehat{\bm{a}}}}
\newcommand{\bb}{{\bm{b}}}
\newcommand{\bc}{{\bm{c}}}
\newcommand{\bchat}{{\widehat{\bm{c}}}}
\newcommand{\bd}{{\bm{d}}}
\newcommand{\bee}{{\bm{e}}}
\newcommand{\beh}{{\bm{eh}}}
\newcommand{\bff}{{\bm{f}}}
\newcommand{\bg}{{\bm{g}}}
\newcommand{\bh}{{\bm{h}}}
\newcommand{\bll}{{\bm{\ell}}}
\newcommand{\bp}{{\bm{p}}}
\newcommand{\br}{{\bm{r}}}
\newcommand{\bs}{{\bm{s}}}
\newcommand{\bu}{{\bm{u}}}
\newcommand{\bw}{{\bm{w}}}
\newcommand{\bx}{{\bm{x}}}
\newcommand{\by}{{\bm{y}}}
\newcommand{\bz}{{\bm{z}}}
\newcommand{\bA}{{\bm{A}}}
\newcommand{\bB}{{\bm{B}}}
\newcommand{\bC}{{\bm{C}}}
\newcommand{\bE}{{\bm{E}}}
\newcommand{\bF}{{\bm{F}}}
\newcommand{\bG}{{\bm{G}}}
\newcommand{\bH}{{\bm{H}}}
\newcommand{\bI}{{\bm{I}}}
\newcommand{\bJ}{{\bm{J}}}
\newcommand{\bM}{{\bm{M}}}
\newcommand{\bN}{{\bm{N}}}
\newcommand{\bP}{{\bm{P}}}
\newcommand{\bQ}{{\bm{Q}}}
\newcommand{\bR}{{\bm{R}}}
\newcommand{\bS}{{\bm{S}}}
\newcommand{\bT}{{\bm{T}}}
\newcommand{\bW}{{\bm{W}}}
\newcommand{\bX}{{\bm{X}}}
\newcommand{\bY}{{\bm{Y}}}
\newcommand{\bIB}{{\bm{B}^{\rm irr}}}
\newcommand{\bOB}{{\bm{B}^{\rm ord}}}
\newcommand{\bOS}{{\bm{OS}}}
\newcommand{\bERR}{{\bm{ERR}}}
\newcommand{\bSP}{{\bm{SP}}}
\newcommand{\bMV}{{\bm{MV}}}
\newcommand{\bBM}{{\bm{BM}}}
\newcommand{\balpha}{{\bm{\alpha}}}
\newcommand{\balphapre}{{\bm{\alpha}^{\rm pre}}}
\newcommand{\bbeta}{{\bm{\beta}}}
\newcommand{\bgamma}{{\bm{\gamma}}}
\newcommand{\bdelta}{{\bm{\delta}}}
\newcommand{\bkappa}{{\bm{\kappa}}}
\newcommand{\bmu}{{\bm{\mu}}}
\newcommand{\bomega}{{\bm{\omega}}}
\newcommand{\bsigma}{{\bm{\sigma}}}
\newcommand{\btau}{{\bm{\tau}}}
\newcommand{\bphi}{{\bm{\phi}}}
\newcommand{\bpsi}{{\bm{\psi}}}
\newcommand{\bzeta}{{\bm{\zeta}}}
\newcommand{\bone}{{\bm{1}}}
\newcommand{\bzero}{{\bm{0}}}

\newcommand{\sfa}{{{\sf a}}}
\newcommand{\sfb}{{{\sf b}}}
\newcommand{\sfc}{{{\sf c}}}
\newcommand{\sfd}{{{\sf d}}}
\newcommand{\sfe}{{{\sf e}}}
\newcommand{\sff}{{{\sf f}}}
\newcommand{\sfg}{{{\sf g}}}
\newcommand{\sfh}{{{\sf h}}}
\newcommand{\sfi}{{{\sf i}}}
\newcommand{\sfX}{{{\sf X}}}
\newcommand{\sfY}{{{\sf Y}}}
\newcommand{\bsfa}{{\mbox{\textsf{\textbf{a}}}}}
\newcommand{\bsfb}{{\mbox{\textsf{\textbf{b}}}}}
\newcommand{\bsfc}{{\mbox{\textsf{\textbf{c}}}}}
\newcommand{\bsfd}{{\mbox{\textsf{\textbf{d}}}}}
\newcommand{\bsfe}{{\mbox{\textsf{\textbf{e}}}}}
\newcommand{\bsff}{{\mbox{\textsf{\textbf{f}}}}}
\newcommand{\bsfg}{{\mbox{\textsf{\textbf{g}}}}}
\newcommand{\bsfh}{{\mbox{\textsf{\textbf{h}}}}}
\newcommand{\bsfi}{{\mbox{\textsf{\textbf{i}}}}}

\newcommand{\Cbar}{{\overline{C}}}
\newcommand{\Dbar}{{\overline{D}}}
\newcommand{\dbar}{{\overline{d}}}
\def\Ctilde{{\widetilde{C}}}
\def\Ftilde{{\widetilde{F}}}
\def\Gtilde{{\widetilde{G}}}
\def\Htilde{{\widetilde{H}}}
\def\Ptilde{{\widetilde{P}}}
\def\Chat{{\widehat{C}}}
\def\ctilde{{\widetilde{c}}}
\def\zbar{{\overline{Z}}}
\def\pitilde{{\widetilde{\pi}}}
\def\omegahat{{\widehat{\omega}}}

\newcommand{\sech}{{\rm sech}}

\newcommand{\sinv}{\sigma^{-1}}

%
%
\newcommand{\sn}{{\rm sn}}
\newcommand{\cn}{{\rm cn}}
\newcommand{\dn}{{\rm dn}}
\newcommand{\sm}{{\rm sm}}
\newcommand{\smh}{{\rm smh}}
\newcommand{\cm}{{\rm cm}}
\newcommand{\cmh}{{\rm cmh}}

\newcommand{\Arg}{{\rm Arg}}

%
%
\newcommand{\zfz}{ {{}_0 \! F_0} }
\newcommand{\zfo}{ {{}_0  F_1} }
\newcommand{\ofz}{ {{}_1 \! F_0} }
\newcommand{\ofo}{ {{}_1 \! F_1} }
\newcommand{\oft}{ {{}_1 \! F_2} }

%
%
\newcommand{\FHyper}[2]{ {\tensor[_{#1 \!}]{F}{_{#2}}\!} }
\newcommand{\FHYPER}[5]{ {\FHyper{#1}{#2} \!\biggl(
   \!\!\begin{array}{c} #3 \\[1mm] #4 \end{array}\! \bigg|\, #5 \! \biggr)} }
\newcommand{\tfo}{ {\FHyper{2}{1}} }
\newcommand{\tfz}{ {\FHyper{2}{0}} }
\newcommand{\threefz}{ {\FHyper{3}{0}} }
\newcommand{\FHYPERbottomzero}[3]{ {\FHyper{#1}{0} \hspace*{-0mm}\biggl(
   \!\!\begin{array}{c} #2 \\[1mm] \hbox{---} \end{array}\! \bigg|\, #3 \! \biggr)} }
\newcommand{\FHYPERtopzero}[3]{ {\FHyper{0}{#1} \hspace*{-0mm}\biggl(
   \!\!\begin{array}{c} \hbox{---} \\[1mm] #2 \end{array}\! \bigg|\, #3 \! \biggr)} }

\newcommand{\phiHyper}[2]{ {\tensor[_{#1}]{\phi}{_{#2}}} }
\newcommand{\psiHyper}[2]{ {\tensor[_{#1}]{\psi}{_{#2}}} }
\newcommand{\PhiHyper}[2]{ {\tensor[_{#1}]{\Phi}{_{#2}}} }
\newcommand{\PsiHyper}[2]{ {\tensor[_{#1}]{\Psi}{_{#2}}} }
\newcommand{\phiHYPER}[6]{ {\phiHyper{#1}{#2} \!\left(
   \!\!\begin{array}{c} #3 \\ #4 \end{array}\! ;\, #5, \, #6 \! \right)\!} }
\newcommand{\psiHYPER}[6]{ {\psiHyper{#1}{#2} \!\left(
   \!\!\begin{array}{c} #3 \\ #4 \end{array}\! ;\, #5, \, #6 \! \right)} }
\newcommand{\PhiHYPER}[5]{ {\PhiHyper{#1}{#2} \!\left(
   \!\!\begin{array}{c} #3 \\ #4 \end{array}\! ;\, #5 \! \right)\!} }
\newcommand{\PsiHYPER}[5]{ {\PsiHyper{#1}{#2} \!\left(
   \!\!\begin{array}{c} #3 \\ #4 \end{array}\! ;\, #5 \! \right)\!} }
\newcommand{\zerophizero}{ {\phiHyper{0}{0}} }
\newcommand{\ophizero}{ {\phiHyper{1}{0}} }
\newcommand{\zphio}{ {\phiHyper{0}{1}} }
\newcommand{\ophio}{ {\phiHyper{1}{1}} }
\newcommand{\tphio}{ {\phiHyper{2}{1}} }
\newcommand{\tphiz}{ {\phiHyper{2}{0}} }
\newcommand{\tPhio}{ {\PhiHyper{2}{1}} }
\newcommand{\opsio}{ {\psiHyper{1}{1}} }

%
%
\newcommand{\stirlingsubset}[2]{\genfrac{\{}{\}}{0pt}{}{#1}{#2}}
\newcommand{\stirlingcycle}[2]{\genfrac{[}{]}{0pt}{}{#1}{#2}}
\newcommand{\assocstirlingsubset}[3]{{\genfrac{\{}{\}}{0pt}{}{#1}{#2}}_{\! \ge #3}}
\newcommand{\genstirlingsubset}[4]{{\genfrac{\{}{\}}{0pt}{}{#1}{#2}}_{\! #3,#4}}
\newcommand{\irredstirlingsubset}[2]{{\genfrac{\{}{\}}{0pt}{}{#1}{#2}}^{\!\rm irr}}
\newcommand{\euler}[2]{\genfrac{\langle}{\rangle}{0pt}{}{#1}{#2}}
\newcommand{\eulergen}[3]{{\genfrac{\langle}{\rangle}{0pt}{}{#1}{#2}}_{\! #3}}
\newcommand{\eulersecond}[2]{\left\langle\!\! \euler{#1}{#2} \!\!\right\rangle}
\newcommand{\eulersecondgen}[3]{{\left\langle\!\! \euler{#1}{#2} \!\!\right\rangle}_{\! #3}}
\newcommand{\binomvert}[2]{\genfrac{\vert}{\vert}{0pt}{}{#1}{#2}}
\newcommand{\binomsquare}[2]{\genfrac{[}{]}{0pt}{}{#1}{#2}}
\newcommand{\doublebinom}[2]{\left(\!\! \binom{#1}{#2} \!\!\right)}


\newenvironment{sarray}{
             \textfont0=\scriptfont0
             \scriptfont0=\scriptscriptfont0
             \textfont1=\scriptfont1
             \scriptfont1=\scriptscriptfont1
             \textfont2=\scriptfont2
             \scriptfont2=\scriptscriptfont2
             \textfont3=\scriptfont3
             \scriptfont3=\scriptscriptfont3
           \renewcommand{\arraystretch}{0.7}
           \begin{array}{l}}{\end{array}}

\newenvironment{scarray}{
             \textfont0=\scriptfont0
             \scriptfont0=\scriptscriptfont0
             \textfont1=\scriptfont1
             \scriptfont1=\scriptscriptfont1
             \textfont2=\scriptfont2
             \scriptfont2=\scriptscriptfont2
             \textfont3=\scriptfont3
             \scriptfont3=\scriptscriptfont3
           \renewcommand{\arraystretch}{0.7}
           \begin{array}{c}}{\end{array}}


\newcommand*\circled[1]{\tikz[baseline=(char.base)]{
  \node[shape=circle,draw,inner sep=1pt] (char) {#1};}}
\newcommand{\ostar}{{\circledast}}
\newcommand{\ostarN}{{\,\circledast_{\vphantom{\dot{N}}N}\,}}
\newcommand{\ostarPsi}{{\,\circledast_{\vphantom{\dot{\Psi}}\Psi}\,}}
\newcommand{\starN}{{\,\ast_{\vphantom{\dot{N}}N}\,}}
\newcommand{\starpsi}{{\,\ast_{\vphantom{\dot{\bpsi}}\!\bpsi}\,}}
\newcommand{\starone}{{\,\ast_{\vphantom{\dot{1}}1}\,}}
\newcommand{\startwo}{{\,\ast_{\vphantom{\dot{2}}2}\,}}
\newcommand{\starinfty}{{\,\ast_{\vphantom{\dot{\infty}}\infty}\,}}
\newcommand{\starT}{{\,\ast_{\vphantom{\dot{T}}T}\,}}

\newcommand*{\Scale}[2][4]{\scalebox{#1}{$#2$}}

\newcommand*{\Scaletext}[2][4]{\scalebox{#1}{#2}} 

\newcommand{\operatorpq}{\Delta^{\langle p,q \rangle}}

\section{Introduction}

For $p,q\in \mathbb{N}_{\geq 1}$ and real numbers $a,b, x_0, y_0 \in \mathbb{R}_{\geq 0}$, consider the following system of first-order ordinary differential equations:
\begin{eqnarray}
x'(t)
&=&
a (y(t))^{q-1}\;,
\qquad 
x(0)  = x_0\;;
\nonumber
\\
y'(t)
&=&
b \,(x(t))^{p-1}\;,
\qquad 
y(0) = y_0\;.
\label{eq.pqODE}
\end{eqnarray}
The solution $(x(t),y(t))$ to this system satisfies
\be
b q \, x(t)^p - a p \, y(t)^q \;=\; bqx_0^p - ap y_0^p;
\label{eq.firstcurve}
\ee
For $a,b, p,q>0$, let $c,d$ be given by
$c = x_0^p - \frac{ap}{bq} y_0^q, d = \frac{bq}{ap} x_0^p - y_0^q$,
and assume that $c,d>0$.
Then we may rewrite~\eqref{eq.firstcurve}
as 
\be
 \dfrac{ x(t)^p}{c} -\dfrac{ y(t)^q}{d} \;=\; 1\;.
\label{eq.curve}
\ee
This curve is sometimes known as the {\em generalised Lam\'e curve}.
When $c=d=1$, $p=q$, the curve $x^p - y^p =1$ is the  hyperbolic analog of the Fermat curve $x^p+y^p=1$.
When $p=q=2$, $x_0=1,y_0=0, a=b=1$, the solution $(x(t),y(t))$ is the pair of hyperbolic functions $(\cosh({t}),\sinh({t}))$.
Therefore, we will call this system~\eqref{eq.pqODE} the 
{\bf $\bm{(p,q)}$-{\em hyperbolic system}}. 

Let us assume $a=b=x_0=1,y_0=0$ in the system~\eqref{eq.pqODE},
and instead of using the symbols `$x$',`$y$' to be functions,
we now switch attention and consider these symbols to be indeterminates and consider the polynomial ring
$\mathbb{C}[x,y]$.
We mimic the $(p,q)$-hyperbolic system by considering instead,
the following differential operator acting on the polynomial ring $\mathbb{C}[x,y]$:
\be
\Delta^{\langle p,q \rangle}
\;\eqdef\; 
x^{p-1} \dfrac{\partial}{\partial y} 
\,+\,  y^{q-1} \dfrac{\partial}{\partial x}
\,.
\ee
Define the polynomial
$X^{\langle p,q \rangle}_{0}(x,y) \coloneqq x$,
and for $n\geq 1$ let 
\be
X^{\langle p,q \rangle}_{n}(x,y) 
\; \eqdef \;
\Delta^{\langle p,q \rangle} \cdot X^{\langle p,q \rangle}_{n-1}(x,y) 
\;=\;
(\Delta^{\langle p,q \rangle})^n \cdot x\;.
\label{eq.def.gentrig}
\ee
When $p,q$ are specified, we will refer to these polynomials as the 
{\bf $\bm{(p,q)}$-{\em hyperbolic polynomials}},
and in general, we refer to these polynomials as the {\bf {\em generalised hyperbolic polynomials}}.

%

The following observations are immediate from the definition~
\eqref{eq.def.gentrig}:
\begin{itemize}
\item Notice that $q=1$ forces $X^{\langle p,1\rangle}_n(x,y) = 0$ for all $n\geq 2$,
and $p=1$ forces $X^{\langle 1,q \rangle}_n(x,y) =0$ for all $n\geq q+1$.
Therefore, in the rest of this paper, we will impose $p,q\geq 2$.

\item It is clear from the definition that the coefficients of 
$X^{\langle p,q \rangle}_{n}(x,y)$
are all non-negative integers.

\item We may also choose to start from $y$ instead of $x$ and define $Y^{\langle p,q\rangle}_0(x,y) \coloneqq y$,
$Y^{\langle p,q\rangle}_n(x,y) \eqdef  (\Delta^{\langle p,q \rangle})^n\cdot y$.
However, we do not gain anything new due to the following identity
\be
Y^{\langle p,q \rangle}_n(x,y) 
\;=\;
X^{\langle q,p \rangle}_n(y,x)\;.
\label{eq.xy}
\ee


\end{itemize}

We list out the first few polynomials $X^{\langle p,q \rangle}_n(x,y)$ for generic $p,q \geq 2$:
\begin{subeqnarray}
	X^{\langle p,q \rangle}_0(x,y) &=& x\\
	X^{\langle p,q \rangle}_1(x,y) &=& y^{q-1}\\
	X^{\langle p,q \rangle}_2(x,y) &=& (q-1)x^{p-1} y^{q-2}\\
	X^{\langle p,q \rangle}_3(x,y) &=& (q-1)x^{p-2} y^{q-3}\left[
		 (q-2) x^p \,+\, (p-1) y^q
		\right]
\label{eq.compute.xpq0123}
\end{subeqnarray}

Our first contribution in this paper is a combinatorial interpretation for the generalised hyperbolic polynomials
in terms of some families of increasing labelled trees;
we do this in Section~\ref{sec.combinter}.

Then in Section~\ref{sec.reduced}, corresponding to each polynomial $X^{\langle p,q \rangle}_n$ we introduce
a bivariate homogeneous polynomial $\Xhat^{\langle p,q \rangle}_n(x,y)$;
we call these the reduced hyperbolic polynomials.
In fact, in Theorem~\ref{prop.realrooted}, we show that these two polynomials are the same up to a substitution $\{x\mapsto x^p, y\mapsto y^q\}$ and multiplication by a simple factor.
The reader may compare the first few polynomials stated in~\eqref{eq.compute.xpq0123} and~\eqref{eq.compute.xhat0123} 
to get a feeling for this.

Our second main contribution is the following theorem.

\begin{theorem}
Let $p,q \geq 3$.
For all $n\geq 3$, the polynomials $\Xhat^{\langle p,q \rangle}_n(x,1)$ are real-rooted.
\label{thm.xhat.realrooted}
\end{theorem}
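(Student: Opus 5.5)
The plan is to turn the operator recursion \eqref{eq.def.gentrig} into a first-order differential recurrence for the univariate polynomials $f_n(u):=\Xhat^{\langle p,q\rangle}_n(u,1)$, and then run the standard interlacing argument (Hermite--Poulain / Obreschkoff style) by induction on $n$.

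\emph{Step 1: the recurrence.} I use the relation of Theorem~\ref{prop.realrooted}, $X^{\langle p,q\rangle}_n(x,y)=x^{r_n}y^{s_n}\,\Xhat^{\langle p,q\rangle}_n(x^p,y^q)$. Here $\Xhat_n$ is homogeneous of some degree $d_n$ in $(u,v)=(x^p,y^q)$, and $r_n,s_n$ are explicit residues determined by $n,p,q$; from \eqref{eq.compute.xpq0123} these are $(r,s)=(1,0),(0,q-1),(p-1,q-2),(p-2,q-3),\dots$. Applying $\operatorpq$ to a monomial gives
\[
\operatorpq\bigl(x^{\alpha}y^{\beta}\bigr)=\beta\, x^{\alpha+p-1}y^{\beta-1}+\alpha\, x^{\alpha-1}y^{\beta+q-1}.
\]
Summing over monomials $x^{r+pi}y^{s+qj}$ with $i+j=d$, and using $v\partial_v=d-u\partial_u$ on homogeneous polynomials, I expect, after setting $v=1$,
\[
f_{n+1}(u)\;=\;\bigl(r_n+(s_n+qd_n)\,u\bigr)f_n(u)\;+\;u\,(p-qu)\,f_n'(u),
\]
possibly divided by a power of $u$. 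That division happens when $r_n=0$ or $s_n=0$, i.e.\ when the prefactor $x^{r-1}$ or $y^{s-1}$ would have a negative exponent, so a factor $x^p$ or $y^q$ is absorbed and $d$ shifts. I will fix the exact bookkeeping of $(r_n,s_n,d_n)$ modulo $p$ and $q$ and record each case.

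\emph{Step 2: the induction.} The coefficients are nonnegative, so all real roots of $f_n$ are $\le 0$. Suppose $f_n$ has only real, simple, negative roots $\xi_1<\dots<\xi_k<0$, plus possibly a root at $0$. At each $\xi_i$ we have
\[
f_{n+1}(\xi_i)=\xi_i(p-q\xi_i)\,f_n'(\xi_i),
\]
and $\xi_i(p-q\xi_i)<0$. Since $f_n'$ alternates in sign at consecutive simple roots, so does $f_{n+1}$, which gives $k-1$ roots of $f_{n+1}$ strictly interlacing those of $f_n$. The remaining roots must be located at the ends. Near $0$, compare the sign of $f_{n+1}(0)=r_nf_n(0)$ (or of the lowest-order coefficient in the degenerate case) with $f_{n+1}(\xi_k)$. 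Near $-\infty$, compare the sign of $f_{n+1}(\xi_1)$ with that of the leading coefficient, which is $(s_n+qd_n-q\deg f_n)$ times the leading coefficient of $f_n$; this also determines whether $\deg f_{n+1}=\deg f_n+1$. Counting, $f_{n+1}$ should have $\deg f_{n+1}$ real roots, all simple and nonpositive, which closes the induction. The base case is $n=3$, where $f_3(u)=(q-1)\bigl[(q-2)u+(p-1)\bigr]$ is linear (and $n=4,5$ can be checked by hand if needed).

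\emph{Main obstacle.} The hard part is the boundary root count in the cases where the division by $u$ occurs, or where the root at $0$ appears or disappears (this depends on $n$ modulo $p$ and $q$). There one must check that the sign of the leading coefficient, or of the constant term, is exactly what is needed to produce the extra root in $(-\infty,\xi_1)$ or in $(\xi_k,0)$. I expect this is where $p,q\ge 3$ is used: it makes the quantities $r_n$, $s_n$ and $s_n+qd_n-q\deg f_n$ nonzero with the right signs, whereas for $p=2$ or $q=2$ some factor like $q-2$ vanishes and degrees collapse. I would first tabulate $(r_n,s_n,d_n,\deg f_n)$ across one period and verify the end signs case by case.
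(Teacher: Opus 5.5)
Your plan is correct and is essentially the paper's proof. The paper uses the same univariate recurrence $f_{n+1}=\bigl(r_n+(s_n+qd_n)u\bigr)f_n+u(p-qu)f_n'$, divided by $u$ exactly when $r_n=0$; the case $s_n=0$ needs no division, since the absorbed $y^q$ becomes $1$. It then cites Liu--Wang's criteria (Theorem~\ref{thm.liuwang}(a) when $r_n>0$, (b) when $r_n=0$), which package exactly the sign-alternation count you describe.

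The boundary checks you flag go through uniformly, even though $r_n$ and $s_n$ vanish periodically for every $p,q$. The condition $f_n(0)>0$ propagates, since $f_{n+1}(0)$ equals $r_nf_n(0)$ or $(s_n+qd_n)f_n(0)+pf_n'(0)$, and this gives the root in $(\xi_k,0)$ when $r_n>0$. Because the leading coefficient of $f_{n+1}$ is positive, comparing signs at $\xi_1$ and $-\infty$ supplies the last root exactly when $\deg f_{n+1}$ exceeds the number of roots already found.
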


We will prove this theorem in Section~\ref{sec.realrooted}.

Before proceeding to our main results, we provide some context behind our work in Section~\ref{sec.context}.
Then in Section~\ref{sec.cosh.sinh.Maclaurin},
we demonstrate how the Maclaurin expansions of the solutions to the generalised hyperbolic system
can be obtained from the generalised hyperbolic polynomials.

\subsection{Generalised trigonometric and hyperbolic systems and Schett-like generation}
\label{sec.context}

Let us state our reasons for introducing the generalised hyperbolic polynomials and the context in which they arise.
Two decades ago, Conrad and Flajolet initiated the combinatorial theory associated to the Fermat cubic $x^3+y^3=1$
in their beautiful paper \cite{Conrad_06} (also see Conrad's PhD thesis \cite{Conrad_02}).
The central object of their paper was the pair of {\em Dixonian elliptic functions} $(\cm(t),\sm(t))$ 
which are the solution to the system of differential equations
\be
x' = -y^2 \qquad y' = x^2 \qquad \text{with initial conditions } x(0)=1, y(0) =0\;.
\label{eq.dixon.diff}
\ee
The first detailed study of $\cm$, $\sm$ was carried out by Dixon \cite{Dixon_1890} after whom these functions are named.
In their abstract, Conrad and Flajolet described these functions to be ``pregnant with interesting combinatorics:''
they constructed several combinatorial interpretations for the Maclaurin coefficients (including tree and permutation interpretations), 
stated several interesting continued fractions,
established a connection to P\'olya urns in probability theory (see \cite{Flajolet_05, Flajolet_06} for more on urn models),
and motivated many natural questions in analysis, combinatorics, and probability.
In \cite[Section~3.1]{Conrad_06}, the authors considered the operator $\Delta^{\langle 3,3 \rangle}$
and the associated $(3,3)$-hyperbolic polynomials.
However, their consideration of a differential operator associated to a system of differential equations
was, in fact, motivated by an earlier work of Dumont.
Dumont introduced the {\em Schett polynomials} around 1980 \cite{Dumont_79,Dumont_81}
by using the operator $yz \dfrac{\partial}{\partial x} + xz \dfrac{\partial}{\partial y} + xy \dfrac{\partial}{\partial z}$
and it was related to the system
\be
x' = yz, \quad y' = zx, \quad z'=xy\;,
\ee
which are associated to the Jacobi elliptic functions $\sn,\cn$ and $\dn$.
It must be mentioned that parallel to Dumont's work, 
Flajolet \cite{Flajolet_80}
and  Viennot \cite{Viennot_80} 
also initiated the combinatorial study of Jacobi elliptic functions at the same time.

It is natural to ask if one can further this investigation by considering higher degree generalisations of the differential system~\eqref{eq.dixon.diff}.
This is what motivates us to consider the general system~\eqref{eq.pqODE}
and the associated differential operator $\Delta^{\langle p,q \rangle}$.
Before us, several authors have considered the $(p,q)$-hyperbolic system~\eqref{eq.pqODE} 
or one of its specialisations as a starting point,
and have worked towards constructing generalisations of classical trigonometry.
See detailed historical references in Conrad--Flajolet's paper \cite{Conrad_06},
and also in the recent papers of Gambini, Nicoletti, and Ritelli 
\cite{Gambini_21,Gambini_24,Gambini_25}.

In the current paper, we will not attempt to generalise classical trigonometry.
Instead, we will entirely focus on combinatorial properties of the generalised hyperbolic polynomials
and the solution $(x(t),y(t))$ of the generalised hyperbolic system~\eqref{eq.pqODE}.

\subsection{Maclaurin expansions of solutions to the $(p,q)$-hyperbolic system}
\label{sec.cosh.sinh.Maclaurin}

We will use the following notation in the rest of our paper.

\begin{definition}
For $p,q\geq 2$, we use the pair $(\cosh_{p,q}(t), \sinh_{p,q}(t))$
to denote the solution of the following $(p,q)$-hyperbolic system~\eqref{eq.pqODE}
where the parameters of the system are set to  $x_0 = 1, y_0=0, a=b=1$ (and thus, $c=1, d= q/p$):
\begin{subeqnarray}
x'(t) 
&=&
y(t)^{q-1}\;,
\qquad 
x(0) = 1\;,
\\
y'(t) 
&=&
x(t)^{p-1}\;,
\qquad y(0) = 0\;.
\end{subeqnarray}

\end{definition}

Notice that when $p=q=2$,
$(\cosh_{2,2}, \sinh_{2,2}) = (\cosh,\sinh)$,
the classical hyperbolic cosine and sine,
and when $p=q=3$,
we have the Dixonian hyperbolic functions
$(\cosh_{3,3}, \sinh_{3,3}) = (\cmh,\smh)$.
Finally, notice that under the assumption $c,d>0$, the following simple rescaling
\[x(t) = (c)^{1/p}  \cosh_{p,q}(t)\,, \qquad y(t) =  (pd/q)^{1/q} \sinh_{p,q}(t) \]
gives us a local parameterisation $(x(t),y(t))$ 
for the generic curve~\eqref{eq.curve}.

We immediately notice that the generalised hyperbolic polynomials may be used to express all higher derivatives of 
$\cosh_{p,q}(t)$ and $\sinh_{p,q}(t)$,
and also obtain their Maclaurin expansions.
We record this fact in the following proposition.

\begin{proposition}
The higher derivatives of  $\cosh_{p,q}(t)$ and $\sinh_{p,q}(t)$ are given by:
\begin{subeqnarray}
 \; \dfrac{d^n}{dt^n} \cosh_{p,q}(t)
&=&
X^{\langle p,q\rangle}_n\biggl( \cosh_{p,q}(t) \,,\, \sinh_{p,q}(t)\biggr)\;,
\\
\dfrac{d^n}{dt^n} \sinh_{p,q}(t)
&=&
X^{\langle q,p\rangle}_n\biggl( \sinh_{p,q}(t) \,,\, \cosh_{p,q}(t)\biggr)\;.
\label{eq.higher.derivative}
\end{subeqnarray}
These functions have the following Maclaurin expansions:
\be
\cosh_{p,q}(t)
\;=\;
\sum_{n=0}^\infty X^{\langle p,q \rangle}_n(1,0) \dfrac{t^n}{n!}
\;,
\qquad
\sinh_{p,q}(t)
\;=\;
\sum_{n=0}^\infty X^{\langle q,p \rangle}_n(0,1) \dfrac{t^n}{n!}
\;.
\label{eq.maclaurin}
\ee
\end{proposition}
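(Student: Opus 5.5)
The argument is an induction on $n$, using the chain rule along solutions of the $(p,q)$-hyperbolic system, followed by evaluation at $t=0$.

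Write $C(t)=\cosh_{p,q}(t)$ and $S(t)=\sinh_{p,q}(t)$. For any polynomial $F\in\mathbb{C}[x,y]$, the chain rule together with $C'=S^{q-1}$ and $S'=C^{p-1}$ gives
\[
\frac{d}{dt}F\bigl(C(t),S(t)\bigr) = F_x(C,S)\,S^{q-1} + F_y(C,S)\,C^{p-1} = \bigl(\Delta^{\langle p,q\rangle}F\bigr)\bigl(C(t),S(t)\bigr).
\]
In words, differentiating along the trajectory is the same as applying $\Delta^{\langle p,q\rangle}$ and then evaluating.

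For the first identity in \eqref{eq.higher.derivative}, I take $F=X^{\langle p,q\rangle}_{n-1}$ and use the recursion \eqref{eq.def.gentrig}. Induction from $X_0=x$ then gives $\frac{d^n}{dt^n}C = X^{\langle p,q\rangle}_n(C,S)$.

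For the second identity, the same argument starting from $F=y$ gives $\frac{d^n}{dt^n}S = Y^{\langle p,q\rangle}_n(C,S)$. Applying \eqref{eq.xy}, we have $Y^{\langle p,q\rangle}_n(C,S)=X^{\langle q,p\rangle}_n(S,C)$. One can also check this directly: swapping the roles $x\leftrightarrow y$ and $p\leftrightarrow q$ maps $\Delta^{\langle p,q\rangle}$ to $\Delta^{\langle q,p\rangle}$.

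For the Maclaurin expansions \eqref{eq.maclaurin}, I evaluate the derivative formulas at $t=0$, where $(C(0),S(0))=(1,0)$. This gives $C^{(n)}(0)=X^{\langle p,q\rangle}_n(1,0)$ and $S^{(n)}(0)=X^{\langle q,p\rangle}_n(0,1)$. It then remains to justify that these Taylor series actually represent $C$ and $S$. The vector field on the right-hand side of the system is polynomial, hence analytic, so the Cauchy--Kovalevskaya (Cauchy) existence theorem for analytic ODEs shows that the solution is real-analytic near $0$. Therefore $C$ and $S$ equal their Maclaurin series on some neighbourhood of $0$, or at least coincide with them as formal power series.

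The only mildly delicate point is this analyticity justification; the rest is routine induction.
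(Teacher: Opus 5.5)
Your proposal is correct and follows essentially the same route as the paper: the paper checks $\frac{d}{dt}F(C,S)=(\Delta^{\langle p,q\rangle}F)(C,S)$ on monomials $x^ky^l$ (which is your chain-rule step, by linearity), obtains the second formula from \eqref{eq.xy}, and then sets $t=0$. Your observation that the solution is real-analytic, so that the Taylor series actually represents it near $0$, supplies a justification the paper leaves implicit.
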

\proof
For $k,l\in \mathbb{N}$, we expand $\Delta^{\langle p,q \rangle} x^k y^l$ and 
$\dfrac{d}{dt} \cosh_{p,q}(t)^k \sinh_{p,q}(t)^l$
and comparing the two expansions
gives~(\ref{eq.higher.derivative}a).
The identity~(\ref{eq.higher.derivative}b) can then be obtained by using~\eqref{eq.xy}.
Finally, the Maclaurin expansions are obtained by setting $t=0$.
\qed

\section{Combinatorial interpretation for the generalised hyperbolic polynomials: arity-parity trees}
\label{sec.combinter}

We now construct a family of weighted trees whose weighted sum will give us the generalised hyperbolic polynomials.
Before doing this, we recall a few standard notions.

An {\em ordered tree} is a rooted tree in which the children of each vertex are linearly ordered.
An {\em increasing ordered tree} is an ordered tree in which each vertex is assigned a unique label from a totally ordered set such that the label of a vertex is always greater than the label of its parent.
We say that a vertex $v$ has {\em arity} $k\in \mathbb{N}$ if the vertex $v$ has
$k$ ordered slots for children, and for all $1\leq i\leq k$,
slot $i$ has  $0$ or $1$ child.
The {\em height} of a vertex is its distance (in terms of number of edges) 
from the root.
Thus, the root has height $0$.
For a vertex with label $i$, we use ${\rm ht}(i)$ to denote its height.

\begin{definition}
Given $k,l\in \mathbb{N}$,
a $\boldsymbol{(k,l)}$\textbfit{-arity-parity tree} of size $n$
is an increasing ordered tree with $n$ vertices (labelled $1$ through $n$) 
such that each vertex at an odd height has arity $k$,
and each vertex at an even height has arity $l$.
We use $\scrt^{\langle k,l \rangle}_n$ to denote the set of all 
$(k,l)$-arity-parity trees of size $n$.
\end{definition}

We now assign weights to arity-parity trees.
Let $T\in \scrt^{\langle k,l \rangle}_n$ be a $(k,l)$-arity-parity tree
with $n$ vertices and consider the vertex labelled  $i\in[n]$.
We introduce ${\rm wt}_T(i)$, the {\em weight of the vertex $i$ in tree $T$}
to be
\be
{\rm wt}_T(i)
\;=\;
\begin{cases}
x^{k - (\text{number of children of }i) }
\qquad \text{if ${\rm ht}(i)$ is odd}
\\
y^{l - (\text{number of children of }i) }
\qquad \text{if ${\rm ht}(i)$ is even}
\end{cases}
\ee
The weight of the tree $T$, denoted by ${\rm wt}(T)$,
 is the product of the weights of its vertices, i.e., 
${\rm wt}(T) \;=\; \prod_{i=1}^n {\rm wt}_T(i)$.
Also, we have the following alternate way of expressing the weight of a tree:
\be
{\rm wt}(T)
\;=\;
x^{k \cdot{\rm odd}(T) -  ({\rm even}(T) -1)} y^{l \cdot {\rm even}(T) - {\rm odd}(T) }
\;=\;
x^{(k+1) {\rm odd}(T) - (n-1) } y^{(l+1) {\rm even}(T) - n }
\label{eq.weight.alternate}
\ee
where ${\rm odd}(T)$ (${\rm even}(T)$) denotes the number of vertices of $T$ with odd height (resp., even height).

Here each vertex $i$ contributes a power of $x$ or $y$, 
depending on the parity of its height.
However, the exponent that $i$ contributes is always equal to its arity  minus the number of children of $i$;
this is equal to the number of {\em empty child slots} of $i$.

\begin{example}
All $(2,3)$-arity-parity trees of size $n=3$ are drawn in Figure~\ref{fig.eg}.

\begin{figure}[!ht]
\centering
\Scale[0.6]{
{
\newcommand{\nodea}{\node[draw,circle] (a) {$1$};}
\newcommand{\nodeb}{\node[draw,circle] (b) {$2$};}
\newcommand{\nodec}{\node[draw,circle] (c) {$3$};}
\newcommand{\noded}{\node (d) {$y$};}
\newcommand{\nodee}{\node (e) {$x$};}
\newcommand{\nodef}{\node (f) {$x$};}
\newcommand{\nodeg}{\node (g) {$x$};}
\newcommand{\nodeh}{\node (h) {$x$};}
\begin{tikzpicture}[scale=0.5]
\matrix[row sep = 0.5em, ampersand replacement=\&]{
       \&         \&       \&        \&     \nodea      \&    \&         \\ 
       \&     \nodeb    \&       \&         \&    \nodec     \&    \&    \noded     \\ 
   \nodee    \&         \& \nodef \& \nodeg \&\& \nodeh \& \\
};
\path[thick] (a) edge (b);
\path[thick] (a) edge (c);
\path[thick] (a) edge (d);
\path[thick] (b) edge (e);
\path[thick] (b) edge (f);
\path[thick] (c) edge (g);
\path[thick] (c) edge (h);
\end{tikzpicture}
}
\;
{
\newcommand{\nodea}{\node[draw,circle] (a) {$1$};}
\newcommand{\nodeb}{\node[draw,circle] (b) {$3$};}
\newcommand{\nodec}{\node[draw,circle] (c) {$2$};}
\newcommand{\noded}{\node (d) {$y$};}
\newcommand{\nodee}{\node (e) {$x$};}
\newcommand{\nodef}{\node (f) {$x$};}
\newcommand{\nodeg}{\node (g) {$x$};}
\newcommand{\nodeh}{\node (h) {$x$};}
\begin{tikzpicture}[scale=0.5]
\matrix[row sep = 0.5em, ampersand replacement=\&]{
       \&         \&       \&        \&     \nodea      \&    \&         \\ 
       \&     \nodeb    \&       \&         \&    \nodec     \&    \&    \noded     \\ 
   \nodee    \&         \& \nodef \& \nodeg \&\& \nodeh \& \\
};
\path[thick] (a) edge (b);
\path[thick] (a) edge (c);
\path[thick] (a) edge (d);
\path[thick] (b) edge (e);
\path[thick] (b) edge (f);
\path[thick] (c) edge (g);
\path[thick] (c) edge (h);
\end{tikzpicture}
}
{
\newcommand{\nodea}{\node[draw,circle] (a) {$1$};}
\newcommand{\nodeb}{\node[draw,circle] (b) {$2$};}
\newcommand{\nodec}{\node (c) {$y$};}
\newcommand{\noded}{\node[draw,circle] (d) {$3$};}
\newcommand{\nodee}{\node (e) {$x$};}
\newcommand{\nodef}{\node (f) {$x$};}
\newcommand{\nodeg}{\node (g) {$x$};}
\newcommand{\nodeh}{\node (h) {$x$};}
\begin{tikzpicture}[scale=0.5]
\matrix[row sep = 0.5em, ampersand replacement=\&]{
       \&         \&       \&    \nodea    \&           \&    \&         \\ 
       \&     \nodeb    \&       \&     \nodec    \&         \&  \noded   \&        \\ 
   \nodee    \&         \& \nodef \&  \&  \nodeg \& \& \nodeh  \\
};
\path[thick] (a) edge (b);
\path[thick] (a) edge (c);
\path[thick] (a) edge (d);
\path[thick] (b) edge (e);
\path[thick] (b) edge (f);
\path[thick] (d) edge (g);
\path[thick] (d) edge (h);
\end{tikzpicture}
}
\;
{
\newcommand{\nodea}{\node[draw,circle] (a) {$1$};}
\newcommand{\nodeb}{\node[draw,circle] (b) {$3$};}
\newcommand{\nodec}{\node (c) {$y$};}
\newcommand{\noded}{\node[draw,circle] (d) {$2$};}
\newcommand{\nodee}{\node (e) {$x$};}
\newcommand{\nodef}{\node (f) {$x$};}
\newcommand{\nodeg}{\node (g) {$x$};}
\newcommand{\nodeh}{\node (h) {$x$};}
\begin{tikzpicture}[scale=0.5]
\matrix[row sep = 0.5em, ampersand replacement=\&]{
       \&         \&       \&    \nodea    \&           \&    \&         \\ 
       \&     \nodeb    \&       \&     \nodec    \&         \&  \noded   \&        \\ 
   \nodee    \&         \& \nodef \&  \&  \nodeg \& \& \nodeh  \\
};
\path[thick] (a) edge (b);
\path[thick] (a) edge (c);
\path[thick] (a) edge (d);
\path[thick] (b) edge (e);
\path[thick] (b) edge (f);
\path[thick] (d) edge (g);
\path[thick] (d) edge (h);
\end{tikzpicture}
}
{
\newcommand{\nodea}{\node[draw,circle] (a) {$1$};}
\newcommand{\nodeb}{\node (b) {$y$};}
\newcommand{\nodec}{\node[draw,circle] (c) {$2$};}
\newcommand{\noded}{\node[draw,circle] (d) {$3$};}
\newcommand{\nodee}{\node (e) {$x$};}
\newcommand{\nodef}{\node (f) {$x$};}
\newcommand{\nodeg}{\node (g) {$x$};}
\newcommand{\nodeh}{\node (h) {$x$};}
\begin{tikzpicture}[scale=0.5]
\matrix[row sep = 0.5em, ampersand replacement=\&]{
       \&         \&   \nodea     \&       \&           \&    \&         \\ 
    \nodeb   \&         \&   \nodec    \&         \&         \&  \noded   \&        \\ 
       \&    \nodee     \&  \& \nodef \& \nodeg  \&  \& \nodeh  \\
};
\path[thick] (a) edge (b);
\path[thick] (a) edge (c);
\path[thick] (a) edge (d);
\path[thick] (c) edge (e);
\path[thick] (c) edge (f);
\path[thick] (d) edge (g);
\path[thick] (d) edge (h);
\end{tikzpicture}
}
{
\newcommand{\nodea}{\node[draw,circle] (a) {$1$};}
\newcommand{\nodeb}{\node (b) {$y$};}
\newcommand{\nodec}{\node[draw,circle] (c) {$3$};}
\newcommand{\noded}{\node[draw,circle] (d) {$2$};}
\newcommand{\nodee}{\node (e) {$x$};}
\newcommand{\nodef}{\node (f) {$x$};}
\newcommand{\nodeg}{\node (g) {$x$};}
\newcommand{\nodeh}{\node (h) {$x$};}
\begin{tikzpicture}[scale=0.5]
\matrix[row sep = 0.5em, ampersand replacement=\&]{
       \&         \&   \nodea     \&       \&           \&    \&         \\ 
    \nodeb   \&         \&   \nodec    \&         \&         \&  \noded   \&        \\ 
       \&    \nodee     \&  \& \nodef \& \nodeg  \&  \& \nodeh  \\
};
\path[thick] (a) edge (b);
\path[thick] (a) edge (c);
\path[thick] (a) edge (d);
\path[thick] (c) edge (e);
\path[thick] (c) edge (f);
\path[thick] (d) edge (g);
\path[thick] (d) edge (h);
\end{tikzpicture}
}
}
\vspace{3em}

\Scale[0.6]{
{
\newcommand{\nodea}{\node[draw,circle] (a) {$1$};}
\newcommand{\nodeb}{\node[draw,circle]  (b) {$2$};}
\newcommand{\nodec}{\node (c) {$y$};}
\newcommand{\noded}{\node (d) {$y$};}
\newcommand{\nodee}{\node[draw,circle]  (e) {$3$};}
\newcommand{\nodef}{\node (f) {$x$};}
\newcommand{\nodeg}{\node (g) {$y$};}
\newcommand{\nodeh}{\node (h) {$y$};}
\newcommand{\nodei}{\node (i) {$y$};}
\begin{tikzpicture}[scale=0.5]
\matrix[row sep = 0.5em, ampersand replacement=\&]{
       \&         \&        \&       \&    \nodea       \&           \\ 
       \&         \&   \nodeb    \&         \&    \nodec     \&  \noded          \\ 
       \&    \nodee     \&  \& \nodef \&  \&    \\
   \nodeg   \&  \nodeh       \&   \nodei    \&         \&         \&         \\
};
\path[thick] (a) edge (b);
\path[thick] (a) edge (c);
\path[thick] (a) edge (d);
\path[thick] (b) edge (e);
\path[thick] (b) edge (f);
\path[thick] (e) edge (g);
\path[thick] (e) edge (h);
\path[thick] (e) edge (i);
\end{tikzpicture}
}
{
\newcommand{\nodea}{\node[draw,circle] (a) {$1$};}
\newcommand{\nodeb}{\node[draw,circle]  (b) {$2$};}
\newcommand{\nodec}{\node (c) {$y$};}
\newcommand{\noded}{\node (d) {$y$};}
\newcommand{\nodee}{\node[draw,circle]  (e) {$3$};}
\newcommand{\nodef}{\node (f) {$x$};}
\newcommand{\nodeg}{\node (g) {$y$};}
\newcommand{\nodeh}{\node (h) {$y$};}
\newcommand{\nodei}{\node (i) {$y$};}
\begin{tikzpicture}[scale=0.5]
\matrix[row sep = 0.5em, ampersand replacement=\&]{
       \&         \&        \&       \&    \nodea       \&           \\ 
       \&         \&   \nodeb    \&         \&    \nodec     \&  \noded          \\ 
       \&    \nodef     \&  \& \nodee \&  \&    \\
     \&         \&   \nodeg    \&   \nodeh      \&     \nodei    \&         \\
};
\path[thick] (a) edge (b);
\path[thick] (a) edge (c);
\path[thick] (a) edge (d);
\path[thick] (b) edge (e);
\path[thick] (b) edge (f);
\path[thick] (e) edge (g);
\path[thick] (e) edge (h);
\path[thick] (e) edge (i);
\end{tikzpicture}
}
{
\newcommand{\nodea}{\node[draw,circle] (a) {$1$};}
\newcommand{\nodeb}{\node[draw,circle]  (b) {$2$};}
\newcommand{\nodec}{\node (c) {$y$};}
\newcommand{\noded}{\node (d) {$y$};}
\newcommand{\nodee}{\node[draw,circle]  (e) {$3$};}
\newcommand{\nodef}{\node (f) {$x$};}
\newcommand{\nodeg}{\node (g) {$y$};}
\newcommand{\nodeh}{\node (h) {$y$};}
\newcommand{\nodei}{\node (i) {$y$};}
\begin{tikzpicture}[scale=0.5]
\matrix[row sep = 0.5em, ampersand replacement=\&]{
       \&         \&        \&    \nodea   \&           \&           \\ 
 \nodec      \&         \&      \&      \nodeb   \&         \&  \noded          \\ 
       \&         \& \nodee \& \& \nodef   \&    \\
 \&   \nodeg   \&  \nodeh       \&   \nodei    \&         \&                  \\
};
\path[thick] (a) edge (b);
\path[thick] (a) edge (c);
\path[thick] (a) edge (d);
\path[thick] (b) edge (e);
\path[thick] (b) edge (f);
\path[thick] (e) edge (g);
\path[thick] (e) edge (h);
\path[thick] (e) edge (i);
\end{tikzpicture}
}
{
\newcommand{\nodea}{\node[draw,circle] (a) {$1$};}
\newcommand{\nodeb}{\node[draw,circle]  (b) {$2$};}
\newcommand{\nodec}{\node (c) {$y$};}
\newcommand{\noded}{\node (d) {$y$};}
\newcommand{\nodee}{\node[draw,circle]  (e) {$3$};}
\newcommand{\nodef}{\node (f) {$x$};}
\newcommand{\nodeg}{\node (g) {$y$};}
\newcommand{\nodeh}{\node (h) {$y$};}
\newcommand{\nodei}{\node (i) {$y$};}
\begin{tikzpicture}[scale=0.5]
\matrix[row sep = 0.5em, ampersand replacement=\&]{
       \&         \&        \&    \nodea   \&           \&           \\ 
 \nodec      \&         \&      \&      \nodeb   \&         \&  \noded          \\ 
       \&         \& \nodef \& \& \nodee   \&    \\
\& \& \&   \nodeg   \&  \nodeh       \&   \nodei                         \\
};
\path[thick] (a) edge (b);
\path[thick] (a) edge (c);
\path[thick] (a) edge (d);
\path[thick] (b) edge (e);
\path[thick] (b) edge (f);
\path[thick] (e) edge (g);
\path[thick] (e) edge (h);
\path[thick] (e) edge (i);
\end{tikzpicture}
}
{
\newcommand{\nodea}{\node[draw,circle] (a) {$1$};}
\newcommand{\nodeb}{\node[draw,circle]  (b) {$2$};}
\newcommand{\nodec}{\node (c) {$y$};}
\newcommand{\noded}{\node (d) {$y$};}
\newcommand{\nodee}{\node[draw,circle]  (e) {$3$};}
\newcommand{\nodef}{\node (f) {$x$};}
\newcommand{\nodeg}{\node (g) {$y$};}
\newcommand{\nodeh}{\node (h) {$y$};}
\newcommand{\nodei}{\node (i) {$y$};}
\begin{tikzpicture}[scale=0.5]
\matrix[row sep = 0.5em, ampersand replacement=\&]{
       \&    \nodea      \&        \&       \&          \&           \\ 
 \nodec      \&   \noded      \&      \&         \&      \nodeb   \&           \\ 
       \&         \&  \&  \nodee \&   \&   \nodef  \\
       \&  \&   \nodeg   \&  \nodeh       \&   \nodei          \&             \\
};
\path[thick] (a) edge (b);
\path[thick] (a) edge (c);
\path[thick] (a) edge (d);
\path[thick] (b) edge (e);
\path[thick] (b) edge (f);
\path[thick] (e) edge (g);
\path[thick] (e) edge (h);
\path[thick] (e) edge (i);
\end{tikzpicture}
}
{
\newcommand{\nodea}{\node[draw,circle] (a) {$1$};}
\newcommand{\nodeb}{\node[draw,circle]  (b) {$2$};}
\newcommand{\nodec}{\node (c) {$y$};}
\newcommand{\noded}{\node (d) {$y$};}
\newcommand{\nodee}{\node[draw,circle]  (e) {$3$};}
\newcommand{\nodef}{\node (f) {$x$};}
\newcommand{\nodeg}{\node (g) {$y$};}
\newcommand{\nodeh}{\node (h) {$y$};}
\newcommand{\nodei}{\node (i) {$y$};}
\begin{tikzpicture}[scale=0.5]
\matrix[row sep = 0.5em, ampersand replacement=\&]{
       \&    \nodea      \&        \&       \&          \&           \\ 
 \nodec      \&   \noded      \&      \&      \nodeb    \&        \&           \\ 
       \&         \&  \nodef \&   \&     \nodee  \& \\
  \&      \&  \&   \nodeg   \&  \nodeh       \&   \nodei                     \\
};
\path[thick] (a) edge (b);
\path[thick] (a) edge (c);
\path[thick] (a) edge (d);
\path[thick] (b) edge (e);
\path[thick] (b) edge (f);
\path[thick] (e) edge (g);
\path[thick] (e) edge (h);
\path[thick] (e) edge (i);
\end{tikzpicture}
}
}
\caption{All $(2,3)$-arity-parity trees of size three:
these are increasing ordered trees on the vertex set $\{1,2,3\}$
in which each vertex at an odd height has arity $2$ and each vertex at an even height has arity $3$. 
Here we have inserted the letter $x$ or $y$ associated to each empty child slot, and the weight of a vertex is the product of the weights of its empty child slots.}
\label{fig.eg}
\end{figure}

\end{example}

We are now ready to state the main theorem of this section.

\begin{theorem}
For any $p,q\in \mathbb{N}_{\geq 2}$, and $n\geq 1$,
the generalised hyperbolic polynomials $X^{\langle p,q \rangle}_n$
satisfy:
\be
X^{\langle p,q \rangle}_n(x,y)
\;=\;
\sum_{T\in \scrt^{\langle p-1,q-1 \rangle}_n}
{\rm wt}(T)
\;.
\label{eq.combinter}
\ee
\label{thm.combinter}
\end{theorem}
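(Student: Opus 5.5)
We prove \eqref{eq.combinter} by induction on $n$, via a growth procedure: every tree in $\scrt^{\langle p-1,q-1\rangle}_{n+1}$ is obtained uniquely from a tree in $\scrt^{\langle p-1,q-1\rangle}_n$ by attaching the new vertex $n+1$. The aim is to show that this procedure reproduces exactly the action of $\Delta^{\langle p,q\rangle}$ on monomials. Write $Z_n(x,y)$ for the right-hand side of \eqref{eq.combinter}.

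For the base case $n=1$, the only tree is the single root. The root has height $0$ and arity $q-1$ with no children, so its weight is $y^{q-1}=X^{\langle p,q\rangle}_1$.

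For the inductive step, we use the fact that in an increasing tree the vertex with the largest label $n+1$ is a leaf. Deleting it gives a bijection
\[
\scrt^{\langle p-1,q-1\rangle}_{n+1}\;\longleftrightarrow\;\{(T,s)\,:\,T\in\scrt^{\langle p-1,q-1\rangle}_n,\ s \text{ an empty child slot of } T\}.
\]
We then compute how the weight changes when vertex $n+1$ is placed into the slot $s$. There are two cases.

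\begin{enumerate}
\item If $s$ belongs to a vertex at even height, that slot carried a factor $y$, which is now removed. The new vertex sits at odd height, has arity $p-1$ and no children, so it contributes $x^{p-1}$. The weight is multiplied by $x^{p-1}/y$.
\item If $s$ belongs to a vertex at odd height, then symmetrically the weight is multiplied by $y^{q-1}/x$.
\end{enumerate}

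Since each vertex's weight records one variable per empty slot, a monomial $x^ay^b={\rm wt}(T)$ has exactly $b$ empty slots of $y$-type and $a$ of $x$-type. Summing over all slots therefore gives
\[
\sum_s {\rm wt}(T\cup_s\{n+1\}) \;=\; b\,x^{p-1}x^ay^{b-1}+a\,y^{q-1}x^{a-1}y^b \;=\; \Delta^{\langle p,q\rangle}(x^ay^b).
\]
Summing over $T$ yields $Z_{n+1}=\Delta^{\langle p,q\rangle}Z_n$, and the induction hypothesis then gives $Z_{n+1}=X^{\langle p,q\rangle}_{n+1}$.

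The step that needs the most care is the bijection together with the slot count. Two points must be checked.

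\begin{enumerate}
\item The exponent of $x$ (respectively $y$) in ${\rm wt}(T)$ counts exactly the empty slots at odd (respectively even) height. This holds by the definition of the weight, since each empty slot contributes a single variable of the appropriate type.
\item Attaching a child to a slot at height $h$ places the new vertex at height $h+1$, so the parity swaps correctly. Also, the ordered-slot structure makes distinct slots give distinct trees, so there is no double counting.
\end{enumerate}

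These checks are routine bookkeeping. The conceptual content is just the identification of each empty slot with a single application of a partial derivative.
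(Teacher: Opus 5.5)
Your proof is correct and follows essentially the same route as the paper: induction on $n$, inserting the leaf $n+1$ into an empty child slot, so that an even-height slot realises $y\mapsto x^{p-1}$ and an odd-height slot realises $x\mapsto y^{q-1}$, which together give the action of $\Delta^{\langle p,q\rangle}$. The paper phrases this as a Chen grammar, whereas you spell out the bijection with (tree, empty slot) pairs and the computation on monomials; this is the same argument written in more detail.
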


\proof
We will prove this theorem by applying induction and 
using Chen grammars~\cite{Chen_93}  (see also \cite{Dumont_96}).

For $n=1$, the set  $\scrt^{\langle p-1,q-1 \rangle}_1$
consists of a single tree which is just the root with arity $q-1$ and no children:
it has weight $y^{q-1}$.
Thus, the identity~\eqref{eq.combinter} is true at $n=1$.
Assuming that~\eqref{eq.combinter} holds true for $n$,
we prove it at $n+1$.

Let $T$ be a $(p-1,q-1)$-arity-parity tree on vertex set $[n]$.
Consider the various ways in which a new vertex with label $n+1$ 
can be attached to this tree as a child of some vertex $j\in [n]$
to form a new tree $T'$, (notice that $n+1$ will always be a leaf in $T'$).
The height of the vertex $j$ will be the same in both $T$ and in $T'$,
and we consider the following two cases 
depending on the parity of ${\rm ht}(j)$:
\begin{itemize}
\item If ${\rm ht}(j)$ is odd in $T$ and $T'$,
then ${\rm ht}(n+1)$ is even in $T'$.
To insert $n+1$, we select one of the empty child slots of $j$ in $T$
(which contributed a factor $x^1$ to ${\rm wt}(T)$),
and replace this slot with $n+1$.
The new vertex $n+1$ is a leaf with arity $q-1$.
Therefore, we have removed one empty child slot contributing $x$
and replaced it with $q-1$ new empty child slots each contributing $y$.
Thus, we have the grammar rule $x \mapsto y^{q-1}$.

\item If ${\rm ht}(j)$ is even in $T$ and $T'$,
then ${\rm ht}(n+1)$ is odd in $T'$.
Here, we instead have removed one empty child slot contributing $y$
and replaced it with $p-1$ new empty child slots each contributing $x$.
Thus, we have the grammar rule $y \mapsto x^{p-1}$.

\end{itemize}
We have thus established
\be
\sum_{T\in \scrt^{\langle p-1,q-1 \rangle}_{n+1}}
{\rm wt}(T)
\;=\;
\left(x^{p-1}\dfrac{d}{dy} + y^{q-1}\dfrac{d}{dx}\right)
\sum_{T\in \scrt^{\langle p-1,q-1 \rangle}_{n}}
{\rm wt}(T)
\;=\;
\Delta X^{\langle p,q \rangle}_n
\;=\;
X^{\langle p,q\rangle}_{n+1}
\;,
\ee
which finishes the proof. \qed

\begin{example}
In Figure~\ref{fig.eg}, the total sum of weights is $6x^4y+ 6 x y^5$,
which is also equal to $X^{\langle 3,4 \rangle}_3(x,y)$.
\end{example}

{\bf Remark.}
We later realised that one can come up with this construction by following the general theory
of Leroux and Viennot \cite{Leroux_86} on combinatorial theory of ordinary differential equations.
Also see {\em historiographers theory} in \cite{Randrianirina_25} and references therein.

Tree statistics according to the parity of vertex heights have already been considered by other authors such as in \cite{Deutsch_00,Lin_25}.
\myendremark

Next, we will use our combinatorial interpretation for the generalised hyperbolic polynomials to obtain 
combinatorial interpretations of the Maclaurin coefficients of the functions $\cosh_{p,q}$ and $\sinh_{p,q}$.
To do this, let $\scrc^{\langle k,l \rangle}_n \subseteq \scrt^{\langle k,l \rangle}_n$ 
be the subclass of $(k,l)$-arity-parity trees in which each vertex at an even height has exactly $l$ children.
Also, let $\scrs^{\langle k,l \rangle}_n \subseteq \scrt^{\langle k,l \rangle}_n$ 
be the subclass of $(k,l)$-arity-parity trees in which each vertex at an odd height has exactly $k$ children.

\begin{theorem}
\begin{itemize}
\item[(a)] The function $\cosh_{p,q}(t)$ 
is the exponential generating function of $(p-1,q-1)$-arity parity trees in which each vertex at an even height has exactly 
$q-1$ children:
\be
\cosh_{p,q}(t) 
\;=\;
1+ \sum_{n=1}^\infty \left|\scrc^{\langle p-1,q-1 \rangle}_{q n}\right| \dfrac{t^{q n}}{(q n)!}
\;.
\ee

\item[(b)] The function $\sinh_{p,q}(t)$ 
is the exponential generating function of $(q-1,p-1)$-arity-parity trees in which each vertex at an odd height
has exactly $q-1$ children:
\be
\sinh_{p,q}(t)
\;=\;
\sum_{n=0}^\infty
\left|\scrs^{\langle q-1,p-1 \rangle}_{q n+1}\right|
 \dfrac{t^{qn+1}}{(qn+1)!}
\;.
\ee
\end{itemize}
\label{thm.cosh.sinh.combinter}
\end{theorem}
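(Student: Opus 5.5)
We combine the Maclaurin expansions~\eqref{eq.maclaurin} with Theorem~\ref{thm.combinter} and then specialise the variables. For part (a), $\cosh_{p,q}(t)=\sum_n X^{\langle p,q\rangle}_n(1,0)\,t^n/n!$. For $n\ge1$, Theorem~\ref{thm.combinter} gives $X^{\langle p,q\rangle}_n(1,0)=\sum_{T\in\scrt^{\langle p-1,q-1\rangle}_n}{\rm wt}(T)|_{x=1,y=0}$. Since every factor of ${\rm wt}(T)$ is a monomial, a tree contributes $1$ exactly when the exponent of $y$ is $0$ and contributes $0$ otherwise. The exponent of $y$ counts the empty child slots at vertices of even height. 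So the surviving trees are those in which every even-height vertex has all $q-1$ slots filled, which is precisely the class $\scrc^{\langle p-1,q-1\rangle}_n$. This gives $X^{\langle p,q\rangle}_n(1,0)=|\scrc^{\langle p-1,q-1\rangle}_n|$ for $n\ge 1$. The $n=0$ term is $X_0(1,0)=1$.

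It remains to show that $\scrc^{\langle p-1,q-1\rangle}_n$ is empty unless $q\mid n$. We use~\eqref{eq.weight.alternate}: the $y$-exponent is $q\,{\rm even}(T)-n$, so it vanishes only if $n=q\,{\rm even}(T)$. Directly, each of the ${\rm even}(T)$ even-height vertices has exactly $q-1$ children, all of odd height, and every odd-height vertex arises this way. Hence ${\rm odd}(T)=(q-1){\rm even}(T)$ and $n=q\,{\rm even}(T)$. Only the indices $n=qm$ survive, which yields the stated series.

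For part (b), we use $\sinh_{p,q}(t)=\sum_n X^{\langle q,p\rangle}_n(0,1)\,t^n/n!$. The $n=0$ term vanishes since $X_0(0,1)=0$. For $n\ge1$ we apply Theorem~\ref{thm.combinter} with $(p,q)$ swapped, which gives a sum over $\scrt^{\langle q-1,p-1\rangle}_n$ evaluated at $x=0,y=1$. A tree survives exactly when the $x$-exponent is $0$, that is, when every odd-height vertex has all $q-1$ slots filled; this is the class $\scrs^{\langle q-1,p-1\rangle}_n$. By~\eqref{eq.weight.alternate} with $k=q-1$, the $x$-exponent is $q\,{\rm odd}(T)-(n-1)$. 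It vanishes iff $n=q\,{\rm odd}(T)+1$, so $n\equiv1\pmod q$; the $n=1$ tree consisting of the root alone is included, with ${\rm odd}=0$. Equivalently, the non-root even-height vertices are exactly the children of odd-height vertices, so ${\rm even}(T)-1=(q-1){\rm odd}(T)$.

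The main point needing care is the bookkeeping in~\eqref{eq.weight.alternate}: in each case we must confirm which variable is being set to zero and that the arity parameters are swapped correctly in (b). Once that is checked, the rest is immediate substitution.
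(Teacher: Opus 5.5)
Your proposal is correct and follows the paper's proof. You specialise Theorem~\ref{thm.combinter} at $(x,y)=(1,0)$, resp.\ $(0,1)$ with $p,q$ swapped, identify the surviving trees as $\scrc^{\langle p-1,q-1\rangle}_n$, resp.\ $\scrs^{\langle q-1,p-1\rangle}_n$, and combine with~\eqref{eq.maclaurin}. Your use of~\eqref{eq.weight.alternate} to derive $q\mid n$ and $n\equiv 1 \pmod{q}$ simply makes explicit the counting step that the paper asserts without justification.
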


\proof
For each $T\in \scrc^{\langle k,l \rangle}_n$,
notice that ${\rm wt}(T)$ must be a monomial in $x$,
and conversely, each $T\in \scrt^{\langle k,l \rangle}_n$ whose weight is a monomial in $x$ 
must belong to $\scrc^{\langle k,l \rangle}_n$.
This along with Theorem~\ref{thm.combinter} gives us for $n\geq 1$
\be
X^{\langle k+1,l+1 \rangle}_n(1,0) 
\;= \;
\left|\scrc^{\langle k,l \rangle}_{n}\right|
\;.
\ee
A similar argument where we avoid any factor of $x$ in the weight of trees gives us for $n\geq 0$
\be
X^{\langle k+1,l+1 \rangle}_n(0,1) 
\;= \;
\left|\scrs^{\langle k,l \rangle}_{n}\right|
\;.
\ee

Finally, notice that any tree in which each vertex at even height has  exactly $l$ children 
must satisfy $l+1 | n$.
Also, any tree in which each vertex at odd height has  exactly $k$ children 
must satisfy $n \equiv 1 \pmod{k+1}$.
Putting all of these arguments together along with equation~\eqref{eq.maclaurin} finishes the proof.
\qed

\subsection{Some special cases}

We now elaborate on some special cases and the consequences of the combinatorial interpretation just constructed.

\noindent
\underline{When $p=q=2$:}

Here, the set $\scrt^{\langle 1,1 \rangle}_n$ consists of the unique unary tree on $n$ vertices
that has weight $x$ when $n$ is even and $y$ when $n$ is odd.
This is in agreement with the fact 
\[
X^{\langle 2,2 \rangle}_{2n}(x,y) = x\;, \qquad X^{\langle 2,2 \rangle}_{2n+1}(x,y) = y\;.
\]
Theorem~\ref{thm.cosh.sinh.combinter} gives us the known Maclaurin expansions
\be
\cosh(t) \;=\; \sum_{n=0}^\infty \dfrac{t^{2n}}{(2n)!}\;,
\qquad
\sinh(t) \;=\; \sum_{n=0}^\infty \dfrac{t^{2n+1}}{(2n+1)!}\;.
\ee

\medskip

\noindent
\underline{When $p=q=3$:}

We first list out the polynomials $X_n^{\langle 3,3 \rangle}$ for the first few values of $n$:
\begin{subeqnarray}
	X_0^{\langle 3,3 \rangle}(x,y) &=& x \\
	X_1^{\langle 3,3 \rangle}(x,y) &=& y^2 \\
	X_2^{\langle 3,3 \rangle}(x,y) &=& 2x^2y \\
	X_3^{\langle 3,3 \rangle}(x,y) &=& 2 x (x^3+2 y ^3)\\
	X_4^{\langle 3,3 \rangle}(x,y) &=& 4 y^2 (5 x^3 + y^3)\\
	X_5^{\langle 3,3 \rangle}(x,y) &=& 40 x^2 y (x^3+2 y ^3)
\end{subeqnarray}
We will refer to these as the {\em Dixon-Schett polynomials}.

In the combinatorial interpretation of Theorem~\ref{thm.combinter},
each vertex here has arity $2$ and therefore, $(2,2)$-arity-parity trees are simply increasing binary trees,
and it is well known that these are in bijection with permutations.
Therefore, we have the cardinality
\be 
X_n^{\langle 3,3\rangle}(1,1)
\;=\;
\left|\scrt^{\langle 3,3 \rangle}_n\right| 
\;=\;
n!
\ee
which we may verify for $n\leq 5$ from the data above.

Let $T$ be an increasing binary tree of size $n$.
Let ${\rm odd}_i(T)$ (${\rm even}_i(T)$) denote the number of vertices at an odd height (even height, resp.)
which have exactly $i$ children.
Thus, the weight of a binary tree $T$ can be expressed as follows:
\be
{\rm wt}(T)
\;=\;
x^{{\rm odd}_1(T) + 2{\rm odd}_0(T)} y^{{\rm even}_1(T) + 2{\rm even}_0(T)}\;.
\ee

Here $\cosh_{3,3}(t) = {\rm cmh}(t)$, $\sinh_{3,3}(t) = {\rm smh}(t)$
are the hyperbolic analogs of the elliptic functions sm, cm first studied by Dixon \cite{Dixon_1890},
and they are now known as the Dixonian elliptic functions.
These were further studied by Conrad and Flajolet~\cite{Conrad_06} who developed their combinatorial theory.
In fact, Theorem~\ref{thm.cosh.sinh.combinter} at $p=q=3$ exactly gives us the tree interpretation 
of the coefficients of ${\rm smh}, {\rm cmh}$ constructed by Conrad and Flajolet in \cite[Theorem~4]{Conrad_06}.

The coefficients of ${\rm cosh_{3,3}}$ are \cite[A104134]{OEIS} (upto sign)
and those of $\sinh_{3,3}$ are \cite[A104133]{OEIS} (upto sign).

\medskip

\noindent
\underline{When $p=q$:}

In the general $p=q$ case, we see that $\scrt^{\langle p-1,p-1 \rangle}_n$ consists of all increasing $(p-1)$-ary trees.
It is well known \cite[p. 30, Example 1]{Bergeron_92} that the number of increasing $(p-1)$-ary trees on $n$ vertices is the multifactorial
$\prod_{j=0}^{n-1} (1+ (p-2) j)$.
We record this fact as a proposition.

\begin{proposition}
The $(p,p)$-hyperbolic polynomials under the specialisation $x=y=1$ count the number of increasing $(p-1)$-ary trees as follows:
\be
X^{\langle p,p \rangle}_n(1,1)
\;=\;
|\scrt^{\langle p-1,p-1 \rangle}_n|
\;=\;
\prod_{j=0}^{n-1} (1+ (p-2) j)\;.
\ee
\end{proposition}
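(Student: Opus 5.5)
The proposition has two equalities. The first is Theorem~\ref{thm.combinter} specialised to $p=q$ and $x=y=1$; the second is a direct count of increasing $(p-1)$-ary trees.

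\emph{First equality.} By Theorem~\ref{thm.combinter} with $p=q$, we have $X^{\langle p,p \rangle}_n(x,y)=\sum_{T\in\scrt^{\langle p-1,p-1\rangle}_n}{\rm wt}(T)$ for $n\geq 1$. Every weight ${\rm wt}(T)$ is a monomial with coefficient $1$. Setting $x=y=1$ therefore sends each weight to $1$, so the right-hand side becomes $|\scrt^{\langle p-1,p-1\rangle}_n|$. When $k=l=p-1$, the parity condition is vacuous: every vertex, at any height, has arity $p-1$. Hence $\scrt^{\langle p-1,p-1\rangle}_n$ is exactly the set of increasing $(p-1)$-ary trees on $[n]$. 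The case $n=0$ is separate: $X_0=x$ gives $1$, and the empty product is $1$, so it holds trivially.

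\emph{Second equality.} I would not just cite \cite{Bergeron_92}; instead I would give the short insertion argument already used in the proof of Theorem~\ref{thm.combinter}. Build a tree on $[n]$ by inserting vertices $1,2,\dots,n$ in order, each new largest label filling an empty child slot of the current tree. This is a bijection. Given a tree on $[n]$, deleting vertex $n$ (necessarily a leaf) recovers the tree on $[n-1]$ together with the slot that $n$ occupied.

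It remains to count the choices at each step. A tree with $j\geq 1$ vertices has $j(p-1)$ slots in total, of which $j-1$ are occupied, leaving $j(p-1)-(j-1)=1+(p-2)j$ empty slots. The root is placed in exactly one way. Then vertex $j+1$ has $1+(p-2)j$ choices for $j=1,\dots,n-1$. Multiplying gives $\prod_{j=0}^{n-1}(1+(p-2)j)$, where the $j=0$ factor is $1$.

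As a cross-check, the same count also follows algebraically. In $X^{\langle p,p\rangle}_n(x,y)$ every monomial has total degree $1+(p-2)n$, which one checks by induction. Applying $\Delta^{\langle p,p\rangle}$ and setting $x=y=1$ multiplies the total count by exactly this degree.

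There is no real obstacle here. The only points needing care are the empty-slot count and the edge case $n=0$, which is not covered by Theorem~\ref{thm.combinter} as stated.
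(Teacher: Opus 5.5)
Your proof is correct. For the first equality you follow the paper exactly: specialise Theorem~\ref{thm.combinter} to $p=q$ and $x=y=1$, and observe that the parity condition is vacuous, so $\scrt^{\langle p-1,p-1\rangle}_n$ is the set of increasing $(p-1)$-ary trees.

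For the second equality the paper gives no argument and simply cites \cite[p.~30, Example~1]{Bergeron_92}. Your insertion bijection with the empty-slot count $j(p-1)-(j-1)=1+(p-2)j$ makes the proposition self-contained. It is the same bookkeeping as in the grammar proof of Theorem~\ref{thm.combinter}, read off at $x=y=1$.

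Your algebraic cross-check is a genuinely independent route to the outer equality. Since $X^{\langle p,p\rangle}_n$ is homogeneous of degree $1+(p-2)n$, Euler's identity gives $\bigl(\Delta^{\langle p,p\rangle} f\bigr)(1,1)=(\partial_x f+\partial_y f)(1,1)=(\deg f)\,f(1,1)$. This yields $X^{\langle p,p\rangle}_n(1,1)=\prod_{j=0}^{n-1}(1+(p-2)j)$ without any trees. Combined with Theorem~\ref{thm.combinter}, it then recovers the count of increasing $(p-1)$-ary trees as a consequence rather than an input.

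At $n=0$ you have only checked the outer equality. The middle term depends on whether an empty tree is admitted, a convention the paper also leaves unaddressed.
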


Next, we return to the interpretations of $\cosh_{p,p},\sinh_{p,p}$ in Theorem~\ref{thm.cosh.sinh.combinter}.
The fact that for the $n$th Maclaurin coefficient to be non-zero, $n$ must satisfy $p|n$ and $p|n-1$, respectively,
was previously noticed by Gambini, Nicoletti, and Ritelli in \cite[Theorem~2.1]{Gambini_25}.
Here, we additionally have a combinatorial interpretation for the numbers
$|{}_p c_i |, |{}_p s_i |$ in their theorem.

For $p=q=4$, we now mention a curious statement by B\l asiak and Flajolet in \cite{Blasiak_11}.
In the last row of the table in \cite[Figure~24]{Blasiak_11},
the authors say that the operator $y^3 \partial_x + x^3 \partial_y$
``corresponds to parity of levels in increasing ternary trees'' without any further description,
but then pointed to the references \cite{Flajolet_05, Flajolet_06}.
However, we did not find any mention of ternary trees in these latter papers.
We suspect that the authors of \cite{Blasiak_11} must have known the exact same combinatorial interpretation as ours.

The coefficients of ${\rm cosh_{4,4}}$ are in \cite[A153300]{OEIS}
and those of $\sinh_{4,4}$ are in \cite[A153301]{OEIS}.

\medskip

\noindent
\underline{When $p=2$ or $q=2$:}

\begin{proposition}
For all $n\geq 0$, the $(p,2)$-hyperbolic polynomials and the $(2,p)$-hyperbolic polynomials satisfy the following relation:
\be
X^{\langle p,2 \rangle}_{n+1}(x,y)
\;=\;
X^{\langle 2,p \rangle}_{n}(y,x)
\;.
\label{eq.p2q2id}
\ee
\end{proposition}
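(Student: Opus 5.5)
The plan is to argue by induction on $n$, working directly with the operators rather than with the trees. The case $n=0$ is immediate: $X^{\langle p,2\rangle}_1(x,y)=y^{2-1}=y$, while $X^{\langle 2,p\rangle}_0(y,x)$ is the starting polynomial $x$ with $x$ replaced by $y$, which is also $y$.

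For the inductive step I will use the swap map $\sigma:f(x,y)\mapsto f(y,x)$ and check how it interacts with the two operators. We have
\[
\Delta^{\langle 2,p\rangle}=x\,\partial_y+y^{p-1}\partial_x,
\qquad
\Delta^{\langle p,2\rangle}=x^{p-1}\partial_y+y\,\partial_x .
\]
Conjugating by $\sigma$ interchanges both the variables and the two partial derivatives, so
\[
\sigma\,\Delta^{\langle 2,p\rangle}\,\sigma=y\,\partial_x+x^{p-1}\partial_y=\Delta^{\langle p,2\rangle}.
\]
This is just the identity~\eqref{eq.xy} at the level of operators: $\sigma\Delta^{\langle q,p\rangle}\sigma=\Delta^{\langle p,q\rangle}$ for every pair $p,q$. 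Setting $g_n:=\sigma X^{\langle 2,p\rangle}_n$, this gives
\[
g_{n+1}=\sigma\Delta^{\langle 2,p\rangle}X^{\langle 2,p\rangle}_n=\Delta^{\langle p,2\rangle}g_n .
\]
Combined with the base case $g_0=y=X^{\langle p,2\rangle}_1$, induction yields $g_n=X^{\langle p,2\rangle}_{n+1}$ for all $n\ge0$, which is exactly~\eqref{eq.p2q2id}.

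Equivalently, the same argument reads $X^{\langle p,2\rangle}_{n+1}=(\Delta^{\langle p,2\rangle})^n y = Y^{\langle p,2\rangle}_n$, and by~\eqref{eq.xy} this equals $X^{\langle 2,p\rangle}_n(y,x)$. So the whole statement reduces to the single observation that $\Delta^{\langle p,2\rangle}x=y^{2-1}=y$: when $q=2$, one application of the operator turns $x$ into $y$.

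There is also a combinatorial reading through Theorem~\ref{thm.combinter}. Since $q-1=1$, the root of a $(p-1,1)$-arity-parity tree has arity $1$, so for $n\ge1$ vertex $2$ is forced to be its only child. Deleting the root and re-rooting at vertex $2$ shifts every height by one, which swaps the roles of $x$ and $y$ in the weights. This is a weight-preserving bijection onto $(1,p-1)$-arity-parity trees with $x\leftrightarrow y$, and it recovers the identity for $n\ge1$.

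I expect no real obstacle. The only points needing care are the $n=0$ edge case and tracking the order of arguments under the swap $x\leftrightarrow y$.
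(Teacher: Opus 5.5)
Your proof is correct, but your main argument takes a different route from the paper's.

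\textbf{The paper's route.} The paper checks $n=0$ directly. For $n\ge 1$ it gives a bijective proof through Theorem~\ref{thm.combinter}. The root of a tree in $\scrt^{\langle p-1,1\rangle}_{n+1}$ has arity $1$. Deleting the root and decreasing all labels by $1$ gives a bijection onto $\scrt^{\langle 1,p-1\rangle}_n$ that preserves weights once $x$ and $y$ are swapped.

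This is exactly your third paragraph. For completeness, add the relabelling $i\mapsto i-1$, and note that the deleted root contributes weight $y^0=1$ because its only slot is filled.

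\textbf{Your route.} Your primary argument works at the level of operators instead. You combine the conjugation identity $\sigma\Delta^{\langle q,p\rangle}\sigma=\Delta^{\langle p,q\rangle}$, which underlies \eqref{eq.xy}, with the observation $\Delta^{\langle p,2\rangle}x=y$. Together these give $X^{\langle p,2\rangle}_{n+1}=(\Delta^{\langle p,2\rangle})^n y=Y^{\langle p,2\rangle}_n=X^{\langle 2,p\rangle}_n(y,x)$ essentially in one line.

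\textbf{What each buys.} Your argument is shorter and self-contained, since it does not need the tree model at all. It also isolates why the identity is special to $q=2$: in general $\Delta^{\langle p,q\rangle}x=y^{q-1}$, and this equals $y$ only when $q=2$. Equivalently, at the level of functions it is just the ODE $(\cosh_{p,2})'=\sinh_{p,2}^{\,q-1}=\sinh_{p,2}$.

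The paper's bijective proof fits the purpose of that section, which is to explain identities through arity-parity trees. It gives an explicit combinatorial reason for the shift relations between $(X^{\langle p,2\rangle}_n(1,1))_n$ and $(X^{\langle 2,p\rangle}_n(1,1))_n$. It does the same for the shift between the Maclaurin coefficients of $\cosh_{p,2}$ and $\sinh_{p,2}$, which your algebraic argument explains only formally.
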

\proof
For $n=0$, both sides of equation~\eqref{eq.p2q2id} are equal to $y$.
We use Theorem~\ref{thm.combinter} to provide a bijective proof for $n\geq 1$.
Consider a tree $T\in \scrt^{\langle p-1 ,1\rangle}_{n+1}$;
its root has arity $1$.
Removing the root and then subtracting $1$ from all the remaining vertex labels gives us a tree 
$T'\in \scrt^{\langle 1,p-1 \rangle}_n$; this is clearly a bijection between the two sets 
$\scrt^{\langle p-1 ,1\rangle}_{n+1}$ and $\scrt^{\langle 1,p-1\rangle}_{n}$
and the weights are preserved after interchanging $x$ and $y$.
This finishes the proof.
\qed

As a consequence, we get that the sequences 
$(X^{\langle p,2 \rangle}_{n}(1,1))_{n\geq 0}$ and $(X^{\langle 2,p \rangle}_{n}(1,1))_{n\geq 0}$
are just shifts of each other.
From equation~\eqref{eq.maclaurin},
we also get that the coefficients of $\cosh_{p,2}(t)$, $\sinh_{p,2}(t)$ are shifts of each other,
and so are the coefficients of $\cosh_{2,q}(t)$, $\sinh_{2,q}(t)$;
these facts are evidenced by the identities $(\cosh_{p,2})' = \sinh_{p,2}$
and $(\sinh_{2,q})' = \cosh_{2,q}$.

\medskip

For the special case $p=3,q=2$, recall that the corresponding $(2,1)$-arity-parity trees
are increasing trees in which each vertex at an odd height has arity $2$, and  at an even height has arity $1$.
These are precisely the ``reduced 3-historic trees'' of Burghart and Wagner \cite{Burghart_24}.
This sequence is \cite[A007558]{OEIS}.

\medskip

After this, we show that the coefficients of $\cosh_{p,2}(t)$ also enumerate increasing bilabelled trees of Kuba and Panholzer \cite{Kuba_12, Kuba_16}.
Recall that an {\bf \em increasing bilabelled $\bm{k}$-ary tree} on $n$ vertices
is a $k$-ary ordered tree having $n$ vertices in which each vertex is assigned two different labels
and furthermore, the label sets of different vertices are disjoint, such that both labels of a vertex
are bigger than both labels of its parent. 
We use $\scri^{(k)}_{n}$ to denote the set of increasing bilabelled $k$-ary trees on $n$ vertices 
whose labels are assigned from the set $[2n] = \{1,\ldots, 2n\}$.

\begin{proposition}
The set $\scri^{(k)}_n$ of increasing bilabelled $k$-ary trees on $n$ vertices
is in bijection with the set  $\scrc^{\langle k,1\rangle}_{2n}$ of 
$(k,1)$-arity-parity trees such that each vertex at even height has exactly $1$ child.

Furthermore, let 
\begin{eqnarray}
F(t)
&=&
\sum_{n=1}^\infty 
\left| \scri^{(k)}_{n}\right| \dfrac{t^{2n}}{(2n)!} 
\;=\;
\sum_{n=1}^\infty 
\left| \scrc^{\langle k,1 \rangle}_{2n}\right| \dfrac{t^{2n}}{(2n)!} 
\\
&=&
\sum_{n=1}^\infty 
X^{\langle k+1,2 \rangle}_{2n}(1,0) \dfrac{t^{2n}}{(2n)!} 
\;=\;
\cosh_{k+1,2}(t)-1
\;.
\end{eqnarray}
This exponential generating function satisfies the first order differential equation
\be
(F'(t))^2
\;=\;
2 \Phi(F(t)) 
\;,
\qquad
F(0) = 0\;,
\ee
where $\Phi(s) = \sum_{j=0}^k \frac{1}{j+1}\binom{k}{j} s^{j+1}$.
\end{proposition}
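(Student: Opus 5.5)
We prove the two parts separately: the bijection first, then the differential equation for $F$.

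\emph{The bijection.} Take a tree $T\in\scrc^{\langle k,1\rangle}_{2n}$. Every vertex at even height has arity $1$ and exactly one child, which sits at odd height. Every vertex at odd height has arity $k$, and each of its $k$ slots is either empty or holds a child at even height. Hence the vertices of $T$ split into disjoint pairs $(u,v)$, with $u$ at even height and $v$ its unique child. Contract each pair to a single vertex carrying the two labels $\{u,v\}$, and let the $k$ slots of $v$ become the $k$ slots of the merged vertex. This produces a $k$-ary ordered tree on $n$ vertices whose label sets partition $[2n]$.

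The increasing condition transfers in both directions. In $T$ the labels along the path $u'\to v'\to u\to v$ increase, where $(u',v')$ is the parent pair of $(u,v)$. Thus both labels of the child pair exceed both labels of the parent pair. Conversely, start from a bilabelled tree and give each vertex with labels $a<b$ the chain $a\to b$, attaching the $k$ slots to $b$. The bilabelled condition makes the result increasing: $a$ exceeds both parent labels, in particular the larger one, to which $a$ is attached. Within each pair the order is forced, so the two maps are mutually inverse. This gives $|\scri^{(k)}_n|=|\scrc^{\langle k,1\rangle}_{2n}|$.

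\emph{The equalities for $F$.} The remaining identities in the display follow from the proof of Theorem~\ref{thm.cosh.sinh.combinter}, namely $X^{\langle k+1,2\rangle}_m(1,0)=|\scrc^{\langle k,1\rangle}_m|$, combined with \eqref{eq.maclaurin}. Here $q=2$, and $|\scrc^{\langle k,1\rangle}_m|=0$ unless $2\mid m$. So $F=\cosh_{k+1,2}-1$.

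\emph{The differential equation.} Write $x=\cosh_{k+1,2}=1+F$ and $y=\sinh_{k+1,2}$. The system gives $x'=y$ and $y'=x^{k}$. Then
\[
\frac{d}{dt}\bigl(y^2\bigr)=2yy'=2x^kx'=\frac{d}{dt}\,\frac{2x^{k+1}}{k+1},
\]
so $y^2-\frac{2}{k+1}x^{k+1}$ is constant. This is just \eqref{eq.firstcurve}, and at $t=0$ the constant equals $-\frac{2}{k+1}$. Therefore
\[
(F')^2=y^2=\frac{2}{k+1}\bigl((1+F)^{k+1}-1\bigr).
\]
Expanding $(1+F)^{k+1}-1=\sum_{j=0}^{k}\binom{k+1}{j+1}F^{j+1}$ and using $\frac{1}{k+1}\binom{k+1}{j+1}=\frac{1}{j+1}\binom{k}{j}$ gives exactly $2\Phi(F)$. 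Also $F(0)=0$.

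The one step that needs genuine care is the bijection, specifically checking that the increasing conditions match in both directions. The rest is a routine first-integral computation.
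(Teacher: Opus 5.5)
Your proposal is correct. The bijection is the same edge contraction as in the paper. You also spell out two things the paper compresses into ``this process is reversible'': the merged vertex inherits the $k$ slots of the odd-height vertex, and the increasing condition holds in both directions.

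Where you genuinely differ is the differential equation.

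\begin{itemize}
\item The paper does not derive it. It quotes Kuba and Panholzer's Proposition~1, so the ODE comes from the tree-side theory of bilabelled increasing trees, without using the hyperbolic system at all.
\item You first identify $F=\cosh_{k+1,2}-1$ via the proof of Theorem~\ref{thm.cosh.sinh.combinter}, \eqref{eq.maclaurin}, and the parity of $|\scrc^{\langle k,1\rangle}_m|$. This justifies the chain of equalities for $F$, which the paper's proof leaves implicit.
\item You then observe that $(F')^2=2\Phi(F)$ is just the first integral \eqref{eq.firstcurve} of the $(k+1,2)$-hyperbolic system, namely $2x^{k+1}-(k+1)y^2=2$, rewritten with $x=1+F$ and $y=x'=F'$.
\end{itemize}

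Your route is self-contained within the paper and explains the ODE as the curve equation in disguise. The paper's citation instead ties the result to Kuba--Panholzer's framework. There, $\Phi$ arises as the antiderivative of $(1+s)^k$, the degree-weight function of $k$-ary trees, and no identification with $\cosh_{k+1,2}$ is needed.
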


\proof
Start with a tree $T\in \scrc^{\langle k,1\rangle}_{2n}$ and for every vertex $v$ at an even height,
contract the edge between $v$ and its child and assign the two labels of $v$ and its child 
to the new merged vertex.
The resulting tree $T\in \scri^{(k)}_n$, and as this process is reversible, this is a bijection.

The fact that the differential equation here is satisfied by $F(t)$, follows from \cite[Proposition~1]{Kuba_16}.
\qed

The coefficients of $\sinh_{3,2}(t)$ and $\cosh_{3,2}(t)$ are in \cite[A063902]{OEIS}.

\subsection{Exponential generating function}

Consider the exponential generating function 
for the $(p,q)$-hyperbolic polynomials:
\be 
\scrx^{\langle p,q \rangle}(t;x,y)
\;\eqdef\;
\sum_{n=0}^\infty
X^{\langle p,q \rangle}_n(x,y) \dfrac{t^n}{n!}
\;.
\ee
We now use our combinatorial interpretation in Theorem~\ref{thm.combinter}
and observe the following.

\begin{proposition}
Consider the exponential generating function $\scrx^{\langle p,q \rangle}(t;x,y)$
and set $\scry^{\langle p,q \rangle}(t;x,y) \coloneqq \scrx^{\langle q,p \rangle}(t;y,x)$.
The following statements are true:
\begin{itemize}
\item[(a)]
The pair $(\scrx^{\langle p,q \rangle}, \scry^{\langle p,q \rangle})$
is a solution $(F(t),G(t))$ to the following $(p,q)$-hyperbolic system:
\begin{subeqnarray}
F'
&=&
G^{q-1}
\;,
\qquad F(0) = x\;,
\\
G'
&=&
F^{p-1}\;,
\qquad G(0)=y\;.
\label{eq.egf.ODE}
\end{subeqnarray}

\item[(b)] 
The pair $(\scrx^{\langle p,q \rangle}, \scry^{\langle p,q \rangle})$ satisfies the following equation:
\be
q\scrx^{\langle p,q \rangle}(t;x,y)^p 
\,-\,  
p\scry^{\langle p,q \rangle}(t;x,y)^q
\;=\;
qx^p-py^q
\;.
\label{eq.egf.curve}
\ee

\end{itemize}
\end{proposition}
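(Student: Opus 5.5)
The plan is to prove (a) first, by comparing coefficients of $t^n/n!$ on both sides, and then to obtain (b) from (a) by differentiating in $t$. Throughout I treat $\scrx^{\langle p,q \rangle}(t;x,y)$ as a formal power series in $t$ with coefficients in $\mathbb{C}[x,y]$, so no analytic questions arise.

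For (a), the initial conditions are immediate. We have $\scrx^{\langle p,q \rangle}(0;x,y)=X^{\langle p,q \rangle}_0(x,y)=x$. Similarly $\scry^{\langle p,q \rangle}(0;x,y)=X^{\langle q,p \rangle}_0(y,x)=y$, using~\eqref{eq.xy}, which says that $\scry^{\langle p,q \rangle}=\sum_n Y^{\langle p,q \rangle}_n(x,y)\,t^n/n!$ with $Y_n=(\operatorname{\Delta^{\langle p,q \rangle}})^n\cdot y$.

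The key observation is that $\Delta:=\operatorname{\Delta^{\langle p,q \rangle}}$ is a derivation of $\mathbb{C}[x,y]$. Hence the map $\Phi:f\mapsto\sum_n (\Delta^n f)\, t^n/n!=\exp(t\Delta)f$ is a ring homomorphism $\mathbb{C}[x,y]\to\mathbb{C}[x,y][[t]]$. This follows from the Leibniz formula $\Delta^n(fg)=\sum_k\binom{n}{k}\Delta^k f\,\Delta^{n-k}g$, which is exactly the coefficient identity for a product of exponential generating functions.

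It also satisfies $\frac{d}{dt}\Phi(f)=\Phi(\Delta f)$, simply by shifting the index. Now $\scrx=\Phi(x)$ and $\scry=\Phi(y)$, so
\[
\scrx'=\Phi(\Delta x)=\Phi(y^{q-1})=\Phi(y)^{q-1}=\scry^{\,q-1},
\qquad
\scry'=\Phi(x^{p-1})=\scrx^{\,p-1}.
\]
This proves (a). As an alternative in the spirit of the paper, one could argue combinatorially with Theorem~\ref{thm.combinter}. Deleting the root of a $(p-1,q-1)$-arity-parity tree leaves an ordered $(q-1)$-tuple of possibly empty subtrees, each of which is a $(q-1,p-1)$-tree with the roles of $x$ and $y$ swapped; the exponential formula for increasing structures then gives $\scrx'=\scry^{q-1}$. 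However, the derivation argument is cleaner, and it also handles the $n=0$ term without separate bookkeeping.

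For (b), differentiate the left-hand side with respect to $t$ and use (a):
\[
q p\,\scrx^{p-1}\scrx'-pq\,\scry^{q-1}\scry' = pq\left(\scrx^{p-1}\scry^{q-1}-\scry^{q-1}\scrx^{p-1}\right)=0.
\]
So the left-hand side is constant in $t$. Evaluating at $t=0$ gives $qx^p-py^q$.

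The only point needing care is the homomorphism property of $\exp(t\Delta)$, that is, the Leibniz formula for powers of a derivation. This is standard and I expect it to be the sole, mild obstacle.
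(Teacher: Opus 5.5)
Your proof is correct. Part (b) is the same as the paper's argument: differentiate, use (a) to see the derivative vanishes, and evaluate at $t=0$.

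For part (a) you take a genuinely different route. The paper argues combinatorially from Theorem~\ref{thm.combinter}, which is the alternative you sketch at the end. Removing the root of a $(p-1,q-1)$-arity-parity tree leaves an ordered $(q-1)$-tuple of slots. Each slot is either empty (weight $y$) or holds a $(q-1,p-1)$-arity-parity tree with $x$ and $y$ swapped. Differentiating an exponential generating function of increasing trees corresponds to deleting the root, which gives $F'=G^{q-1}$, and symmetrically $G'=F^{p-1}$.

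Your main argument is purely algebraic and never mentions trees. Since $\Delta$ is a derivation, $\exp(t\Delta)$ is a ring homomorphism $\mathbb{C}[x,y]\to\mathbb{C}[x,y][[t]]$ that intertwines $\Delta$ with $d/dt$. The system then follows from $\Delta x=y^{q-1}$, $\Delta y=x^{p-1}$ and \eqref{eq.xy}.

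Your version has three advantages:
\begin{itemize}
\item It is independent of Theorem~\ref{thm.combinter}.
\item It handles the constant terms $x$ and $y$ (the empty-tree contributions) without the separate bookkeeping the paper's sketch glosses over.
\item It works unchanged for any polynomial vector field. It is the formal-flow version of the reasoning behind \eqref{eq.higher.derivative}, with no analytic input.
\end{itemize}

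The paper's version instead explains the ODE structurally, as the generating-function shadow of the root decomposition of arity-parity trees, in the Leroux--Viennot spirit the paper promotes. The two combine usefully. The root decomposition shows that the tree generating functions satisfy the same system with the same initial values, and formal solutions are unique. So your algebraic proof of (a) plus that decomposition yields an independent second proof of Theorem~\ref{thm.combinter}.
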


\proof
(a) From first principles of enumerative combinatorics, it clearly follows that the system~\eqref{eq.egf.ODE} 
is satisfied.
Indeed, if we start with a $(p-1,q-1)$-arity-parity tree $T$
and remove the root, we obtain an ordered $(q-1)$-tuple of trees;
each of these are either empty with weight $y$, or are a $(q-1,p-1)$-arity-parity tree of size at least $1$ but with the weights $x$ and $y$ swapped.
Similarly, if we start with a $(q-1,p-1)$-arity-parity tree $T'$
and remove the root, we obtain an ordered $(p-1)$-tuple of trees;
each of these are either empty with weight $x$, or are a $(p-1,q-1)$-arity-parity tree of size at least $1$.

(b) As the pair $(\scrx^{\langle p,q \rangle}, \scry^{\langle p,q \rangle})$ satisfies~\eqref{eq.egf.ODE},
we have
\be
(q \scrx^{\langle p,q \rangle}(t;x,y)^p )'
\;=\;
(p \scry^{\langle p,q \rangle}(t;x,y)^q )'
\;=\;
pq \scrx^{\langle p,q \rangle}(t;x,y)^{p-1} 
\scry^{\langle p,q \rangle}(t;x,y)^{q-1}
\ee
Thus, the difference in the left-hand-side of~\eqref{eq.egf.curve} is a constant,
and this constant  can be obtained by plugging $t=0$.
%

\qed

\section{Reduced hyperbolic polynomials}
\label{sec.reduced}

In this section, we fix some $p,q \in \mathbb{N}_{\geq 2}$,
and drop the superscript $\langle p,q \rangle$,
and denote our operator simply by $\Delta$,
and the $(p,q)$-hyperbolic polynomials as $X_n(x,y)$.
Also, we will use the symbols $\sfX, \sfY$ 
to denote 
$\sfX \eqdef x^p$,
$\sfY \eqdef y^q$.

We have the following theorem:

\begin{theorem}
Fix $n,p,q\in \mathbb{N}$ with $n\geq 0$ and $p,q\geq 2$,
and let $r_n,s_n\in \N$ be the smallest nonnegative integers such that
\hbox{$r_n\equiv 1-n \pmod{p}$}
and
\hbox{$s_n\equiv -n \pmod{q}$.}
The following statements hold:
\begin{itemize}
\item[(i)]
The quantity $\dfrac{1}{x^{r_n} y^{s_n}} X_n(x,y)$
is a homogeneous polynomial in the variables 
$\sfX = x^p, \sfY  = y^q$.

\item[(ii)]
Let $\Xhat_n(\sfX, \sfY) \eqdef \dfrac{1}{x^{r_n} y^{s_n}} X_n(x,y)$
be the homogeneous polynomial in part (i)
in the variables $\sfX,\sfY$.
These polynomials satisfy the following recurrences:
\begin{eqnarray}
\hspace*{-10mm}
\Xhat_{n+1}(\sfX,\sfY)
\;=\;
\begin{cases}
(r_n \sfY + s_n \sfX) \Xhat_n(\sfX,\sfY) 
\nonumber\\
\,+\,
\sfX\sfY 
\left(
p\dfrac{\partial}{\partial \sfX}
\,+\,
q\dfrac{\partial}{\partial \sfY}
\right)
\Xhat_n(X,Y)
\qquad \text{when $r_n,s_n>0$}
\\[4mm]
s_n \Xhat_n(\sfX,\sfY)
\,+\,
\sfY
\left(
p\dfrac{\partial}{\partial \sfX}
\,+\,
q\dfrac{\partial}{\partial \sfY}
\right)
\Xhat_n(\sfX,\sfY)
\qquad \text{when $r_n=0,s_n>0$}
\\[4mm]
r_n  \Xhat_n(\sfX,\sfY)
\,+\,
\sfX
\left(
p\dfrac{\partial}{\partial \sfX}
\,+\,
q\dfrac{\partial}{\partial \sfY}
\right)
\Xhat_n(\sfX,\sfY)
\qquad \text{when $r_n>0,s_n=0$}
\\[4mm]
\left(
p\dfrac{\partial}{\partial \sfX}
\,+\,
q\dfrac{\partial}{\partial \sfY}
\right)
\Xhat_n(\sfX,\sfY)
\qquad \text{when $r_n=s_n=0$}
\end{cases}
\nonumber\\
\label{eq.Xhat.recurrences}
\end{eqnarray}
with initial condition $\Xhat_0(\sfX,\sfY) = 1$\;.

\end{itemize}
\label{prop.realrooted}
\end{theorem}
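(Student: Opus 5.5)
We prove (i) and (ii) together by induction on $n$, and we carry a stronger statement through the induction. The stronger claim is that $X_n(x,y)=x^{r_n}y^{s_n}\Xhat_n(x^p,y^q)$, where $\Xhat_n$ is a homogeneous polynomial in $\sfX,\sfY$ of degree $d_n$. The degree is fixed by total degree counting: $\Delta$ sends a monomial $x^ay^b$ to terms of weighted degree shifted consistently.

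The cleanest way to see the degree is through the weight formula~\eqref{eq.weight.alternate}. Every monomial of $X_n$ has the form $x^{p\,\mathrm{odd}(T)-(n-1)}y^{q\,\mathrm{even}(T)-n}$, where we use $k=p-1$, $l=q-1$ and $\mathrm{odd}+\mathrm{even}=n$. So the exponent of $x$ is congruent to $1-n \pmod p$ and the exponent of $y$ is congruent to $-n \pmod q$. Both exponents are nonnegative, so each exponent is at least the least nonnegative residue: the $x$-exponent is at least $r_n$ and the $y$-exponent is at least $s_n$. Dividing by $x^{r_n}y^{s_n}$ leaves $\sfX^{a}\sfY^{b}$ with
\[
pa+r_n=p\,\mathrm{odd}(T)-n+1,\qquad qb+s_n=q\,\mathrm{even}(T)-n.
\]
Hence
\[
a+b=\mathrm{odd}(T)+\mathrm{even}(T)-\frac{n-1+r_n}{p}-\frac{n+s_n}{q}=n-\frac{n-1+r_n}{p}-\frac{n+s_n}{q},
\]
which is independent of $T$. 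This proves (i) directly from Theorem~\ref{thm.combinter}; the case $n=0$ is trivial since $X_0=x$ and $r_0=1$, $s_0=0$.

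For (ii), write $X_n=x^{r_n}y^{s_n}\Xhat_n(x^p,y^q)$ and apply $\Delta=x^{p-1}\partial_y+y^{q-1}\partial_x$ directly, using the chain rule $\partial_x\Xhat_n(x^p,y^q)=px^{p-1}\Xhat_{n,\sfX}$ and similarly for $y$. This gives
\[
\Delta X_n=x^{r_n}y^{s_n}\Bigl[s_n x^{p-1}y^{-1}\Xhat_n+r_n x^{-1}y^{q-1}\Xhat_n+x^{p-1}y^{q-1}\bigl(q\,\Xhat_{n,\sfY}+p\,\Xhat_{n,\sfX}\bigr)\Bigr].
\]
Next we compare the prefactors $x^{r_n}y^{s_n}$ and $x^{r_{n+1}}y^{s_{n+1}}$. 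The residues step down by one: $r_{n+1}=r_n-1$ when $r_n>0$, and $r_{n+1}=p-1$ when $r_n=0$; likewise $s_{n+1}=s_n-1$ when $s_n>0$, and $s_{n+1}=q-1$ when $s_n=0$.

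In the case $r_n,s_n>0$, factor out $x^{r_n-1}y^{s_n-1}$. The three terms inside the bracket become $s_nx^p\Xhat_n$, $r_ny^q\Xhat_n$, and $x^py^q(\cdots)$, which is exactly the first line of~\eqref{eq.Xhat.recurrences}.

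In the case $r_n=0$, $s_n>0$, the term with coefficient $r_n$ vanishes. Factoring out $x^{p-1}y^{s_n-1}$ then yields $s_n\Xhat_n+\sfY(p\partial_\sfX+q\partial_\sfY)\Xhat_n$, which is the second line. The remaining two cases are symmetric. When $r_n=s_n=0$ we factor out $x^{p-1}y^{q-1}$, and only the derivative term survives.

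The main obstacle is the bookkeeping that makes the factored expression genuinely polynomial. In each case, a negative power $x^{-1}$ or $y^{-1}$ occurs only when multiplied by the vanishing coefficient $r_n$ or $s_n$. We check this case by case, and that check also re-proves (i) independently of the tree argument.
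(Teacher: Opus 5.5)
Your proof is correct. Part (ii) is essentially the paper's computation. The paper splits $\Delta\bigl(x^{r_n}y^{s_n}\Xhat_n(x^p,y^q)\bigr)$ by the Leibniz rule and treats $\Delta\Xhat_n(x^p,y^q)$ with Lemma~\ref{lem.homogenous.delta}, which is just your inline chain-rule step packaged as a lemma. It then does the same four-way case split using $r_{n+1}\equiv r_n-1 \pmod p$ and $s_{n+1}\equiv s_n-1 \pmod q$.

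The genuine difference is part (i). The paper gets polynomiality and homogeneity inductively from the recurrence: each case of \eqref{eq.Xhat.recurrences} maps a homogeneous $\Xhat_n$ to a homogeneous $\Xhat_{n+1}$. That argument uses nothing but the definition of $X_n$. You instead read (i) off the tree expansion of Theorem~\ref{thm.combinter} together with \eqref{eq.weight.alternate}. Your route is non-inductive and yields more at once: the explicit monomials $\sfX^{\mathrm{odd}(T)-\lceil (n-1)/p\rceil}\sfY^{\mathrm{even}(T)-\lceil n/q\rceil}$ and the degree $n-\lceil (n-1)/p\rceil-\lceil n/q\rceil$. That is exactly the content of Corollary~\ref{cor.reduced.combinter}, which the paper derives only afterwards. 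The cost is that you depend on the combinatorial interpretation.

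As you note at the end, your computation for (ii) already proves (i) by induction, since each right-hand side is homogeneous whenever $\Xhat_n$ is. So the tree argument is logically optional in your write-up. It would read more cleanly if you presented either the direct tree proof or the inductive one. At present you announce an induction and then do not use it for (i).
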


For a fixed $p,q$ and varying $n$, we will refer to the homogeneous bivariate polynomials $\Xhat_n(\sfX, \sfY)$
( $ = \Xhat^{\langle p,q \rangle}_n(\sfX, \sfY)$)
in Theorem~\ref{prop.realrooted}
as the {\bf \em $\bm{(p,q)}$-reduced hyperbolic polynomials}.

Before proceeding to prove Theorem~\ref{prop.realrooted}, 
we list out the first few values of $\Xhat_n(x,y)$
for $0\leq n\leq 3$.
First notice that for any $p,q\in \mathbb{N}_{\geq 3}$, we must have the following values of $r_n,s_n$ for $0\leq n\leq 3$:
\be
(r_0, s_0)= (1,0)\;, (r_1,s_1) = (0,q-1) \;, (r_2,s_2) = (p-1,q-2) \;, (r_3,s_3) = (p-2,  q-3) \;.
\ee
Using this, we can compute our first few $(p,q)$-reduced hyperbolic polynomials for $p,q\geq 3$:
\begin{subeqnarray}
	\Xhat_0(x,y) &=& 1\\
	\Xhat_1(x,y) &=& 1\\
	\Xhat_2(x,y) &=& (q-1)\\
	\Xhat_3(x,y) &=& (q-1)\left[
		(q-2) x \,+\, (p-1) y
		\right]
\label{eq.compute.xhat0123}
\end{subeqnarray}

\medskip

In order to prove Theorem~\ref{prop.realrooted},
we require the following easy observation.

\begin{lemma}
Let $f(x,y)\in \Z[x,y]$ be a homogeneous polynomial. 
Then we have
\be
	x y \Delta f(x^p,y^q)
	\;=\;
	\left. 
	x y 
	\left(
	p \dfrac{\partial}{\partial x} + q \dfrac{\partial}{\partial y}
	\right)
	f(x,y) 
	\right|_{x\rightarrow x^p, y\rightarrow y^q}\;.
	\label{eq.homogenous.delta}
\ee
\label{lem.homogenous.delta}
\end{lemma}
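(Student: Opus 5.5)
The plan is a direct chain-rule computation; homogeneity of $f$ is not needed, and it is linear in $f$, so it suffices to handle a general polynomial. Write $f_1 \eqdef \frac{\partial f}{\partial x}$ and $f_2 \eqdef \frac{\partial f}{\partial y}$ for the partial derivatives of $f$ in its first and second slots, and let $g(x,y) \eqdef f(x^p,y^q)$.

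The first step is the two partial derivatives of $g$. By the chain rule,
\[
\frac{\partial g}{\partial x} \;=\; p\,x^{p-1}\, f_1(x^p,y^q)\;,
\qquad
\frac{\partial g}{\partial y} \;=\; q\,y^{q-1}\, f_2(x^p,y^q)\;.
\]

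Next apply $\Delta = x^{p-1}\frac{\partial}{\partial y} + y^{q-1}\frac{\partial}{\partial x}$ to $g$. The term $y^{q-1}\frac{\partial g}{\partial x}$ contributes $p\,x^{p-1}y^{q-1} f_1(x^p,y^q)$. The term $x^{p-1}\frac{\partial g}{\partial y}$ contributes $q\,x^{p-1}y^{q-1} f_2(x^p,y^q)$. Hence
\[
\Delta g \;=\; x^{p-1}y^{q-1}\bigl(p f_1 + q f_2\bigr)(x^p,y^q)\;.
\]

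Multiplying by $xy$ gives $x^p y^q\,(p f_1 + q f_2)(x^p,y^q)$. This is exactly $\bigl[\,xy\,(p\partial_x + q\partial_y) f\,\bigr]$ evaluated at $x\to x^p$, $y\to y^q$, because the prefactor $xy$ on the right-hand side of~\eqref{eq.homogenous.delta} becomes $x^p y^q$ under the substitution.

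The only point requiring care is this bookkeeping: the factors $x^{p-1}$ and $y^{q-1}$ produced by $\Delta$ cross over. The $\partial_x$-term of $\Delta$ carries $y^{q-1}$, while the chain rule supplies $x^{p-1}$, and symmetrically for $\partial_y$. After multiplication by $xy$, these combine into the common factor $\sfX\sfY = x^p y^q$. There is no genuine obstacle. This identity is what later allows the recurrence~\eqref{eq.Xhat.recurrences} to be derived, by writing $X_n = x^{r_n}y^{s_n}\Xhat_n(x^p,y^q)$ and applying the Leibniz rule.
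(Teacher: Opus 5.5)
Your chain-rule computation is correct and is essentially the paper's argument. The paper checks the identity on a monomial $x^a y^b$, where both sides equal $ap\,x^{ap}y^{(b+1)q} + bq\,x^{(a+1)p}y^{bq}$, and extends by linearity; this is the same calculation you carry out for a general $f$ at once, and you are right that homogeneity plays no role in either version.
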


\proof
It suffices to verify equation~\reff{eq.homogenous.delta}
for a monomial $x^a y^b$.
Indeed, this is not hard to verify as
both sides of~\reff{eq.homogenous.delta}
are equal to 
$ap\, x^{ap} y^{(b+1)q} \,+\, bq\, x^{(a+1)p}y^{bq}$.
\qed

We are now ready to prove Theorem~\ref{prop.realrooted}.

\proofof{Theorem~\ref{prop.realrooted}}
We will use induction to simultaneously prove (i) and (ii).

For the base case at $n=0$,
we certainly get $\Xhat_{0}(\sfX,\sfY) = 1$
and here $\Xhat_0(\sfX,\sfY)$ is a homogeneous polynomial in $\sfX,\sfY$.

Let us assume now that $\Xhat_n$
is a homogeneous polynomial in $\sfX,\sfY$ for some $n$.
We write 
\begin{subeqnarray}
	\Xhat_{n+1}(\sfX,\sfY) 
	&=& \dfrac{1}{x^{r_{n+1}} y^{s_{n+1}}}\Delta X_{n}(x,y)\\
	&=& \dfrac{\Delta \bigl(x^{r_n} y^{s_n}\bigr)}{
		x^{r_{n+1}} y^{s_{n+1}}} \,
	\Xhat_{n}(x^p,y^q)
	\,+\,
	\dfrac{x^{r_n} y^{s_n}}{x^{r_{n+1}} y^{s_{n+1}}} \, 
	\Delta \Xhat_{n}(x^p,y^q)
\label{eq.xnplus1.rec}
\end{subeqnarray}

We now have a few different cases depending on whether $r_n$ and $s_n$ 
are equal to zero or are non-zero:\\
\begin{itemize}
\item[(i)] If $r_n,s_n>0$,
	we have from the hypothesis
	$r_{n+1} = r_n -1,s_{n+1} = s_n -1$.
	We substitute this in 
	(\ref{eq.xnplus1.rec}b)
	and then apply Lemma~\ref{lem.homogenous.delta}
	to obtain
	\be
	 \Xhat_{n+1}(\sfX, \sfY ) 
	\;:=\;
        (r_n \sfY + s_n \sfX) \Xhat_{n}(\sfX,\sfY)
        \,+\,
        \sfX\sfY
	\left(
        p \dfrac{\partial}{\partial \sfX} + q \dfrac{\partial}{\partial \sfY}
        \right)
	\Xhat_{n}(\sfX,\sfY)
	\;.
	\label{eq.xhat.rs.nonzero}
	\ee
	Since $\Xhat_{n}(\sfX,\sfY)$ is homogeneous,
	so is 
	$\Xhat_{n+1}(\sfX,\sfY)$ from~\eqref{eq.xhat.rs.nonzero}.

\item[(ii)] 
	If $r_n=0$ and $s_n>0$,
	we have from the hypothesis 
	$r_{n+1} = p-1$, $s_{n+1} = s_n-1$.
	We substitute this in
        (\ref{eq.xnplus1.rec}b)
        and then apply Lemma~\ref{lem.homogenous.delta}
        to obtain
	\be
	 \Xhat_{n+1}(\sfX, \sfY )
        \;:=\;
        s_n \Xhat_{n}(\sfX,\sfY)
        \,+\,
        \sfY
        \left(
        p \dfrac{\partial}{\partial \sfX} + q \dfrac{\partial}{\partial \sfY}
        \right)
        \Xhat_{n}(\sfX,\sfY)
        \;.
	\label{eq.xhat.r=0.s.nonzero}
	\ee
	Here, we deduce that $\Xhat_{n+1}(x,y)$ is homogeneous.
	
\end{itemize}
We leave out the details of the other two cases
(when $r_n>0,s_n=0$ and when $r_n=s_n=0$)
as they are similar to the previous two cases.
In all cases, we see that the recurrences~\eqref{eq.Xhat.recurrences}
hold and we deduce that the polynomials $\Xhat_{n+1}$ are also 
homogeneous.
This finishes the proof.
\qed

We have the following corollary.
\begin{corollary}
The $(p,q)$-reduced hyperbolic polynomial satisfies the following identity 
in terms of $(p-1,q-1)$-arity-trees:
\be
\Xhat_n^{\langle p,q \rangle}(\sfX,\sfY)
\;=\;
\sum_{T\in \scrt^{\langle p-1,q-1 \rangle}_n} \sfX^{{\rm odd}(T) - \lceil (n-1)/p \rceil} \sfY^{{\rm even}(T) - \lceil n/q \rceil}
\;.
\label{eq.reduced.combinter}
\ee
Thus, the degree of the reduced hyperbolic polynomial is given by
\be
\deg \Xhat_n^{\langle p,q \rangle}
\;=\;
n  \,-\, \lceil (n-1)/p \rceil \,-\, \lceil n/q \rceil \;.
\ee
\label{cor.reduced.combinter}
\end{corollary}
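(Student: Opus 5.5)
Corollary~\ref{cor.reduced.combinter} follows by combining Theorem~\ref{thm.combinter} with the definition of $\Xhat_n$ in Theorem~\ref{prop.realrooted}. The only real work is to identify the shift exponents $r_n,s_n$ in terms of ceilings.

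\emph{Step 1: rewrite the weight.} By Theorem~\ref{thm.combinter} and the alternate weight formula~\eqref{eq.weight.alternate} with $(k,l)=(p-1,q-1)$, each tree $T\in\scrt^{\langle p-1,q-1\rangle}_n$ contributes
\[
x^{p\,{\rm odd}(T)-(n-1)}\,y^{q\,{\rm even}(T)-n}.
\]
Dividing by $x^{r_n}y^{s_n}$ gives exponents $p\,{\rm odd}(T)-(n-1+r_n)$ and $q\,{\rm even}(T)-(n+s_n)$.

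\emph{Step 2: identify the shifts.} Since $r_n$ is the least nonnegative integer with $r_n\equiv 1-n \pmod p$, the number $n-1+r_n$ is the least multiple of $p$ that is at least $n-1$. Hence $n-1+r_n=p\lceil (n-1)/p\rceil$. In the same way, $n+s_n=q\lceil n/q\rceil$. The exponents therefore become $p\bigl({\rm odd}(T)-\lceil (n-1)/p\rceil\bigr)$ and $q\bigl({\rm even}(T)-\lceil n/q\rceil\bigr)$. By Theorem~\ref{prop.realrooted}(i) these are nonnegative, since the quotient is a polynomial with nonnegative coefficients and no cancellation occurs. Rewriting in $\sfX=x^p$ and $\sfY=y^q$ gives~\eqref{eq.reduced.combinter}. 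The case $n=0$ is trivial and can be checked by hand, since Theorem~\ref{thm.combinter} is stated for $n\ge1$.

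\emph{Step 3: the degree.} Every monomial on the right of~\eqref{eq.reduced.combinter} has total degree
\[
{\rm odd}(T)+{\rm even}(T)-\lceil (n-1)/p\rceil-\lceil n/q\rceil = n-\lceil (n-1)/p\rceil-\lceil n/q\rceil,
\]
because ${\rm odd}(T)+{\rm even}(T)=n$. All coefficients are positive, so the sum is nonzero whenever the tree set is nonempty, and the homogeneous degree is exactly this number.

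The one point needing care is nonemptiness of $\scrt^{\langle p-1,q-1\rangle}_n$ when $p,q\ge2$. This holds because a path alternating parities is always admissible: every arity is at least $1$. This point, together with the ceiling identification in Step 2, is the only thing to check; everything else is bookkeeping.
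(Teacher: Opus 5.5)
Your proposal is correct and follows the paper's proof. Like the paper, you combine Theorem~\ref{thm.combinter} with the alternate weight formula~\eqref{eq.weight.alternate}, divide by $x^{r_n}y^{s_n}$, and rewrite in $\sfX,\sfY$; the identifications $n-1+r_n=p\lceil (n-1)/p\rceil$ and $n+s_n=q\lceil n/q\rceil$ and the nonemptiness check fill in details the paper leaves implicit. Nonnegativity of the exponents does not need Theorem~\ref{prop.realrooted}(i): each tree weight is already a monomial in $x,y$, so $p\,{\rm odd}(T)\ge n-1$ and $q\,{\rm even}(T)\ge n$ hold directly.
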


\proof
Using Theorem~\ref{thm.combinter} and equation~\eqref{eq.weight.alternate},
we have
\be
X^{\langle p,q \rangle}_n
\;=\;
\sum_{T\in \scrt^{\langle p-1,q-1 \rangle}_n} 
x^{p \cdot{\rm odd}(T) -  (n-1) } y^{q\cdot {\rm even}(T) - n}\;.
\ee
Dividing out $x^{r_n} y^{s_n}$ and then writing each factor in terms of $\sfX$, $\sfY$ gives us~\eqref{eq.reduced.combinter}, and the expression for the degree is immediate from this.
\qed

We end this section with the following remark.

\noindent
{\bf Remark.}
Using the recurrence~\eqref{eq.Xhat.recurrences},
we can write out recurrences between
the total degrees of the polynomials
$\Xhat_n(\sfX,\sfY)$ in the variables $\sfX,\sfY$:
\begin{eqnarray}
      {\rm deg} \Xhat_{n+1}(\sfX,\sfY) 
      \;=\;
      \begin{cases}
      1 + {\rm deg} \Xhat_n(\sfX,\sfY)
      \qquad
      \text{if }r_n,s_n>0
      \\[4mm]
      {\rm deg} \Xhat_n(\sfX,\sfY)
      \qquad\qquad
      \text{if }r_n=0 \text{ or }s_n=0 \text{ but not both}
      \\[4mm]
      -1 + {\rm deg} \Xhat_n(\sfX,\sfY)
      \qquad
      \text{if }r_n=s_n=0
      \end{cases}
      \nonumber
      \\
\label{eq.degree.recurrences}
\end{eqnarray}
When $p=q$, the recurrence~\eqref{eq.degree.recurrences} becomes
\begin{eqnarray}
{\rm deg} \Xhat_{n+1}(\sfX,\sfY) 
      \;=\;
      \begin{cases}
      1 + {\rm deg} \Xhat_n(\sfX,\sfY)
      \qquad
      \text{if } p\nmid n,n-1
      \\[4mm]
      {\rm deg} \Xhat_n(\sfX,\sfY)
      \qquad\qquad
      \text{if } p\mid n \text{ or } p\mid n-1
      \end{cases}
      \nonumber
      \\
\end{eqnarray}
From this, we then obtain for all $k\geq 0$
and $0\leq l<p$ we have
\be
{\rm deg} \Xhat_{pk+l}
\;=\;
\begin{cases}
(p-2)k \qquad\qquad\qquad\: \text{ if }l=0,1,2
\\
(p-2) k + (l-2) \qquad \text{ if }2<l< p 
\end{cases}
\ee
\myendremark

\section{Real-rootedness of the univariate reduced hyperbolic polynomials}
\label{sec.realrooted}

In this section, we assume $p,q\geq 3$ for simplicity and drop the superscript $\langle p,q \rangle$,
and denote our operator simply by $\Delta$,
and the $(p,q)$-reduced hyperbolic polynomials as $\Xhat_n(x,y)$.
Our second main contribution of this paper is Theorem~\ref{thm.xhat.realrooted} which we prove now.
%

Our proof of Theorem~\ref{thm.xhat.realrooted} will rely on 
two of Liu and Wang's criteria for real-rootedness \cite[Theorem~2.1~and~Lemma~2.5(ii)]{Liu_06}.
We recall these for the benefit of the reader.

\begin{theorem}{\cite{Liu_06}}
(a)
Let $F, f , g$ be three real polynomials satisfying the following conditions.
\begin{itemize} 
\item $F(x) = a(x)f (x)+b(x)g(x)$, where $a(x),b(x)$ are two real polynomials, such that 
${\rm deg}F = {\rm deg}f$ or ${\rm deg}f +1$.
\item $f,g$ are real-rooted and $g$ weakly interlaces $f$.

\item $F$ and $g$ have leading coefficients of the same sign.

\end{itemize}
Suppose that $b(x)\leq 0$ whenever $f(x) = 0$. Then $F$ is real-rooted and $f$ weakly interlaces  
$F$. 

(b)
Let $G(x) = a(x)f (x) + b(x)g(x)$ where $G, f, g$ are standard (either identically zero or their leading coefficient is positive)
and $a(x), b(x)$ are real polynomials. 
Suppose that $f, g $ real-rooted and $g$ strictly interlaces $f$. 
If ${\rm deg}G\leq  {\rm deg}f$ and $b(x) > 0$ whenever $f (x) = 0$, then $G$ is real-rooted and $G$ strictly interlaces $f$.
\label{thm.liuwang}
\end{theorem}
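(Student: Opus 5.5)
We would prove both parts by evaluating the new polynomial at the zeros of $f$, reading off its signs there, and applying the intermediate value theorem. The bookkeeping in (a) will be easiest if we first reduce to a generic situation.

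\textbf{Setup.} Write $m=\deg f$ and let $r_1\le r_2\le\cdots\le r_m$ be the real roots of $f$. At each root the defining identity gives
\[
F(r_i)=b(r_i)\,g(r_i),\qquad G(r_i)=b(r_i)\,g(r_i).
\]
So the signs of $F$ or $G$ at the $r_i$ are controlled entirely by the sign of $b$ there and the sign of $g$ there. The interlacing hypothesis fixes the sign pattern of $g$ along $r_1,\dots,r_m$. Take the main case $\deg g=m-1$, where the roots of $g$ separate consecutive roots of $f$. Then $g(r_i)$ has sign $(-1)^{m-i}\,\mathrm{sgn}(\mathrm{lc}\,g)$. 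The case $\deg g=m$, with the roots of $g$ interlacing to the left, is handled identically with the pattern shifted by one.

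\textbf{Part (b).} Everything here is strict, so we go first. Strict interlacing gives $g(r_i)\neq 0$, the $r_i$ are simple, and $b(r_i)>0$. Hence $G(r_i)$ is nonzero with sign $(-1)^{m-i}$, since $g$ is standard. The intermediate value theorem gives a root of $G$ in each open interval $(r_i,r_{i+1})$, which accounts for $m-1$ roots.

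Since $\deg G\le m$, either $\deg G=m-1$ and these are all the roots, or $\deg G=m$ and one root is missing. In the second case, $G(r_1)$ has sign $(-1)^{m-1}$, while $G(x)$ has sign $(-1)^m$ as $x\to-\infty$ because $G$ is standard of degree $m$. This forces a root in $(-\infty,r_1)$. All roots of $G$ are then real and simple, and $G$ strictly interlaces $f$.

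\textbf{Part (a), generic case.} We first treat the case where the roots of $f$ are simple, $g(r_i)\neq0$, and $b(r_i)<0$. Then $F(r_i)$ has sign $-(-1)^{m-i}\mathrm{sgn}(\mathrm{lc}\,g)=(-1)^{m-i+1}\mathrm{sgn}(\mathrm{lc}\,F)$. This gives $m-1$ roots of $F$, one in each $(r_i,r_{i+1})$. At $r_m$ the sign of $F$ is opposite to its sign at $+\infty$, which gives a root in $(r_m,\infty)$. If $\deg F=m+1$, the one remaining root is real, since non-real roots come in conjugate pairs; the same sign comparison at $-\infty$ places it in $(-\infty,r_1)$. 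So $F$ is real-rooted and $f$ interlaces $F$.

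\textbf{Part (a), degenerate case (main obstacle).} We reduce the weak hypotheses to the generic case. Common roots of $f$ and $g$ divide $F$ as well, so we factor $(x-r)$ out of $f$, $g$ and $F$. This preserves weak interlacing and the degree relations, and we induct on $\deg f$. The remaining degeneracies are $b(r_i)=0$ and repeated roots of $f$ not shared with $g$. For these, we replace $b$ by $b-\varepsilon$ and perturb $f,g$ slightly to nearby polynomials with simple, strictly interlacing roots. We then apply the generic case and let $\varepsilon\to0$, using continuity of roots (Hurwitz). Real-rootedness and weak interlacing are closed conditions, so they survive the limit.

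The care needed is in checking that the perturbation keeps both $\deg F\in\{m,m+1\}$ and the sign condition on leading coefficients. Replacing $b$ by $b-\varepsilon$ can raise $\deg F$ when $\deg(bg)>\deg(af)$. If that happens, we would instead perturb by $-\varepsilon\cdot c$ for a suitable constant $c$, or argue directly by counting roots with multiplicity in closed intervals.
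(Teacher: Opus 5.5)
The paper gives no proof of this statement. It is quoted from \cite{Liu_06} and used as a black box, so there is no in-paper argument to compare against. Judged on its own, your strategy is the standard one: strip common zeros of $f$ and $g$, evaluate at the zeros of $f$, read off signs, and apply the intermediate value theorem. Part (b) and the generic case of (a) are correct. Three points of bookkeeping need fixing.

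\emph{First: there is no shift in the equal-degree case.} If $t_1\le r_1\le t_2\le\cdots\le t_m\le r_m$ are the zeros of $g$ and $f$, then exactly $m-i$ zeros of $g$ lie to the right of $r_i$. So $\mathrm{sgn}\,g(r_i)=(-1)^{m-i}\,\mathrm{sgn}(\mathrm{lc}\,g)$, the same pattern as when $\deg g=m-1$. This matters: a genuine shift would reverse the conclusion of (b), putting the extra zero of $G$ to the right of $r_m$ instead of to the left of $r_1$.

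\emph{Second: your list of residual degeneracies in (a) is off, and perturbing $f,g$ is both unnecessary and risky.} A repeated zero of $f$ not shared with $g$ cannot occur: if $r_i=r_{i+1}$, weak interlacing forces a zero of $g$ into $[r_i,r_{i+1}]=\{r_i\}$. Hence once all common zeros are divided out, $f$ has simple zeros and $g$ strictly interlaces $f$. The only remaining degeneracy is $b(r_i)=0$. Perturbing $f$ and $g$ independently is dangerous because $a,b$ are arbitrary, so $af+bg$ may owe its low degree to cancellation of leading terms. For example, $f=x$, $g=1$, $a=x^3+1$, $b=-x^4$ give $F=x$, but replacing $f$ by $x+\delta$ gives degree $3$. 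You should also state explicitly the step back up the induction: if $f_1\preceq F_1$ then $(x-r)f_1\preceq (x-r)F_1$. This holds because the two new copies of $r$ can be inserted as an adjacent pair, in whichever order preserves the alternation.

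\emph{Third: perturbing only $b$ is harmless, contrary to your worry.} Set $F_\varepsilon=af+(b-\varepsilon)g=F-\varepsilon g$. Since $\deg g\le\deg f\le\deg F$, for small $\varepsilon>0$ we get $\deg F_\varepsilon=\deg F$. Moreover $\mathrm{lc}\,F_\varepsilon$ keeps the sign of $\mathrm{lc}\,g$: it equals $\mathrm{lc}\,F$, or $\mathrm{lc}\,F-\varepsilon\,\mathrm{lc}\,g$ when $\deg g=\deg F$. The separate degrees of $af$ and $bg$ play no role. The generic case applies to $F_\varepsilon$. Because the degree is fixed, the ordered zeros of $F_\varepsilon$ converge to those of $F$, so real-rootedness and weak interlacing pass to the limit. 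With these corrections the hedge in your last paragraph can be deleted and the argument is complete.
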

See \cite{Liu_06} for all details.

Before proceeding to the proof, we also require the following easy lemma.
\begin{lemma}
Let $f(x,y)\in \C[x,y]$ be a homogeneous polynomial of total degree $d$
in the variables $x,y$.
We then have
\be
        \left.\dfrac{\partial}{\partial y} f(x,y)\right|_{y\rightarrow 1}
        \;=\;
        \left(d - x \dfrac{\partial}{\partial x}\right) f(x,1).
\label{eq.homogeneous.dy}
\ee
\label{lem.homogeneous.dy}
\end{lemma}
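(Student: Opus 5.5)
The identity is linear in $f$, so I would reduce it to monomials and read it off from Euler's identity for homogeneous polynomials.

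By linearity of both sides in $f$, it suffices to treat a single monomial $f(x,y)=x^a y^{b}$ with $a+b=d$, since every homogeneous polynomial of total degree $d$ is a $\C$-linear combination of such monomials. For this monomial the left-hand side of~\eqref{eq.homogeneous.dy} is
\be
\left.\dfrac{\partial}{\partial y} x^a y^b\right|_{y\rightarrow 1} \;=\; b\, x^a \;.
\ee
On the right-hand side, $f(x,1)=x^a$, so that
\be
\left(d - x\dfrac{\partial}{\partial x}\right) x^a \;=\; (d-a)\,x^a \;=\; b\,x^a \;.
\ee
The two sides agree, which proves the lemma.

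Equivalently, one may invoke Euler's identity
\be
x\,\dfrac{\partial f}{\partial x} + y\,\dfrac{\partial f}{\partial y} \;=\; d\, f \;.
\ee
Setting $y=1$ gives $\partial_y f(x,1) = d f(x,1) - x\,\partial_x f(x,1)$. Here $\partial_x$ commutes with the specialisation $y\to 1$ because $y$ is not being differentiated.

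I expect no real obstacle. The only point needing care is this remark that $\partial_x$ commutes with setting $y=1$, which is precisely what makes the right-hand side meaningful as an operator applied to the univariate polynomial $f(x,1)$.
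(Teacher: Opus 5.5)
Your proof is correct and is essentially the paper's own argument: both reduce by linearity to a monomial $x^a y^{d-a}$ and check that each side equals $(d-a)x^a$. The Euler-identity version you add is an equivalent repackaging of the same computation.
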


\proof
It suffices to verify~\reff{eq.homogeneous.dy}
for a monomial $x^a y^{d-a}$.
Indeed, we can see that both sides of~\reff{eq.homogeneous.dy}
are equal to~$(d-a)x^{a}$.
\qed

\proofof{Theorem~\ref{thm.xhat.realrooted}}
In this proof we will use $\Xhat_{n}(x)$
to denote $\Xhat_{n}(x,1)$ and use induction.

For the base case, we look at~\eqref{eq.compute.xhat0123},
and immediately see that the zero of the linear polynomial $\Xhat_3$ is real and non-positive. 

By induction, we assume that $\Xhat_{n}(x)$
is real-rooted and all its zeroes are simple and negative.
Also, let $r_n,s_n$ be as Theorem~\ref{prop.realrooted},
and let $d$ be the total degree of $\Xhat_{n}(x,y)$ in both variables $x,y$.
As in the proof of Theorem~\ref{prop.realrooted},
we have four different cases depending on whether 
$r_n$ and $s_n$ are zero or not:
\begin{itemize}
	\item When $r_n,s_n>0$,
	using~\reff{eq.Xhat.recurrences}
	and Lemma~\ref{lem.homogeneous.dy}, we obtain
	\begin{eqnarray}
        \Xhat_{n+1}(x) 
	&=&
	\left[
	(r_n +s_n x)
	\,+\, p x\dfrac{\partial}{\partial x}
        \,+\,
	q x
	\left(
	d \,-\,
	x \dfrac{\partial}{\partial x}  
        \right)
	\right]
        \Xhat_{n}(x)
        \nonumber\\
	&=&
	[r_n + (s_n+qd)x] \Xhat_n(x)
	\,+\,
	x (p - q x) \Xhat_n'(x)\;.
	\end{eqnarray}
We set $F(x) = \Xhat_{n+1}(x)$, $f(x) = \Xhat_n(x)$
and $g(x) = \Xhat_n'(x)$
in 
Theorem~\ref{thm.liuwang}(a)
and notice that $b(x) := x(p - q x)$
satisfies $b(x)\leq 0$ for all $x\leq 0$.

\item When $r_n>0$ and $s_n=0$,
using~\reff{eq.Xhat.recurrences}
and Lemma~\ref{lem.homogeneous.dy}, we obtain
\begin{eqnarray}
        \Xhat_{n+1}(x) 
        &=&
	\left[r_n \,+\, p x\dfrac{\partial}{\partial x}
	\,+\, 
	q x \left(d - x\dfrac{\partial}{\partial x}\right)
	\right]
	\Xhat_{n}(x) 
	\nonumber\\
	&=&
	(r_n + qdx)\Xhat_n(x) 
	\,+\,
	x(p - q x) \Xhat_n'(x) 
\end{eqnarray}
Here again, we set $F(x) = \Xhat_{n+1}(x)$, $f(x) = \Xhat_n(x)$
and $g(x) = \Xhat_n'(x)$
in 
Theorem~\ref{thm.liuwang}(a)
and again notice that $b(x) := x(p - q x)$
satisfies $b(x)\leq 0$ for all $x\leq 0$.

\item When $r_n=0$ and $s_n>0$,
using~\reff{eq.Xhat.recurrences}
and Lemma~\ref{lem.homogeneous.dy}, we obtain
\begin{eqnarray}
        \Xhat_{n+1}(x) 
        &=&
	\left[s_n \,+\, p\dfrac{\partial}{\partial x}
	\,+\, 
	q  \left(d - x\dfrac{\partial}{\partial x}\right)
	\right]
	\Xhat_{n}(x) 
	\nonumber\\
	&=&
	(s_n + qd)\Xhat_n(x) 
	\,+\,
	(p - q x) \Xhat_n'(x) 
\end{eqnarray}
Here we set $F(x) = \Xhat_{n+1}(x)$, $f(x) = \Xhat_n(x)$
and $g(x) = \Xhat_n'(x)$
in 
Theorem~\ref{thm.liuwang}(b).
We then notice $b(x) := (p - q x)$
satisfies $b(x)> 0$ for all $x\leq 0$.

\item When $r_n=s_n=0$,
using~\reff{eq.Xhat.recurrences}
and Lemma~\ref{lem.homogeneous.dy}, we obtain
\begin{eqnarray}
        \Xhat_{n+1}(x) 
        &=&
	\left[p\dfrac{\partial}{\partial x}
	\,+\, 
	q  \left(d - x\dfrac{\partial}{\partial x}\right)
	\right]
	\Xhat_{n}(x) 
	\nonumber\\
	&=&
	qd \Xhat_n(x) 
	\,+\,
	(p - q x) \Xhat_n'(x) 
\end{eqnarray}
Here again, we set $F(x) = \Xhat_{n+1}(x)$, $f(x) = \Xhat_n(x)$
and $g(x) = \Xhat_n'(x)$
in 
Theorem~\ref{thm.liuwang}(b).
We then notice $b(x) := (p - q x)$
satisfies $b(x)> 0$ for all $x\leq 0$.
\end{itemize}
In each of the above cases, we get that $\Xhat_{n+1}(x)$ is real-rooted.
\qed

As a consequence of Theorem~\ref{thm.xhat.realrooted},
the coefficients of the polynomial $\Xhat_n(x,1)$  must be log-concave and unimodal.

We end this section by remarking that a real-rootedness property associated to the Schett polynomials was conjectured by 
Lin and Ma \cite[Conjecture~5.1]{Lin_25};
we find this to be very intriguing.

\section*{Tool and computational resource disclosure}

The author declares that this manuscript was primarily written by him. 
However, ChatGPT was occasionally used to search for references, and also to review preliminary versions of the paper.
During one such review, ChatGPT suggested the content of Corollary~\ref{cor.reduced.combinter}.
All mathematical statements, proofs, references, and final wording were checked and approved by the author, who takes full responsibility for the content of the manuscript.

\section*{Acknowledgments}

We thank Alan Sokal and Andrea Sportiello for helpful discussions.
The author is currently supported by a Tsinghua University Shuimu scholarship.

\end{document}